\newtheorem{theorem}{Theorem}[section]
\newtheorem{proposition}[theorem]{Proposition}
\newtheorem{lemma}[theorem]{Lemma}
\newcounter{thmcounter}
\newcounter{introthmcounter}
\newtheorem{corollary}[theorem]{Corollary}
\theoremstyle{definition}
\newtheorem*{definition*}{Definition}
\newtheorem*{question*}{Question}
\newcounter{proofcount}
\theoremstyle{remark}
\newtheorem{remark}[theorem]{Remark}
\newtheorem*{remark*}{Remark}
\newtheorem*{example*}{Example}
\def\Z{{\mathbb Z}}
\def\N{{\mathbb N}}
\def\cE{{\mathcal E}}
\def\cB{{\mathcal B}}
\def\cC{{\mathcal C}}
\def\cA{{\mathcal A}}
\def\cL{{\mathcal L}}
\def\cM{{\mathcal M}}
\def\cS{{\mathcal S}}
\def\cX{{\mathcal X}}
\def\freq{{\rm freq}}
\newcommand{\1}{\ensuremath{\mathds{1}}}
\title[Multiple partial rigidity rates in low complexity subshifts]{Multiple partial rigidity rates in low complexity subshifts}
\author{Trist\'an Radi\'c}
\address{Department of mathematics, Northwestern University, 2033 Sheridan Rd, Evanston, IL, United States of America}
\email{tristan.radic@u.northwestern.edu}
\thanks{Northwestern University}
\subjclass[2020]{Primary: 37A05; Secondary: 37B10,37B02}
\keywords{partial rigidity, partial rigidity rate, S-adic subshifts}
\begin{document}
\date{\today}
\maketitle


\begin{abstract}
Partial rigidity is a quantitative notion of recurrence and provides a global obstruction which prevents the system from being strongly mixing. A dynamical system $(X, \cX, \mu, T)$ is partially rigid if there is a constant $\delta >0$ and sequence $(n_k)_{k \in \N}$ such that $\displaystyle \liminf_{k \to \infty } \mu(A \cap T^{n_k}A) \geq \delta \mu(A)$  for every $A \in \cX$, and the partial rigidity rate is the largest $\delta$ achieved over all sequences. 
For every integer $d \geq 1$, via an explicit construction, we prove the existence of a minimal subshift $(X,S)$ with $d$ ergodic measures having distinct partial rigidity rates. The systems built are $\cS$-adic subshifts of finite alphabetic rank that have non-superlinear word complexity and, in particular, have zero entropy. 
\end{abstract}

\section{Introduction}

For measure preserving systems, partial rigidity quantitatively captures recurrence along a particular trajectory. Roughly speaking, this measurement ensures that at least a proportion $\delta \in (0,1]$ of any measurable set $A$ returns to $A$ along some sequence of iterates. The notion was introduced by Friedman \cite{Friedman_partial_mixing_rigidity_factors:1989} and defined formally by King \cite{King_joining-rank_finite_mixing:1988}. An important property of partially rigid systems is that, besides the trivial system, they are not strongly mixing. Although the converse does not hold, many common examples of non-mixing systems are partially rigid, see for example \cite{Dekking_Keane_mixing_substitutions:1978,Katok_interval_exchange_not_mixing:1980,Cortez_Durand_Host_Maass_continuous_measurable_eigen_LR:2003,Bezuglyi_Kwiatkowski_Medynets_Solomyak_Finite_rank_Bratteli:2013,Danilenko_finite_rank_rationalerg_partial_rigidity:2016,Creutz_mixing_minimal_comp:2023, Goodson_Ryzhikov_conj_joinings_producs_rank1:1997}. 

To be more precise, a measure-preserving systems $(X, \cX, \mu, T)$ is \emph{partially rigid} if there exists $\delta > 0$ and an increasing sequence $(n_k)_{k \in \N}$ of integers such that 
\begin{equation}  \label{eq p rigid}
\liminf_{k \to \infty} \mu (A \cap T^{-n_k}A) \geq \delta \mu(A)
\end{equation}
for every measurable set $A$. A constant $\delta>0$ and a sequence $(n_k)_{k \in \N}$ satisfying \eqref{eq p rigid} are respectively called a \emph{constant of partial rigidity} and a \emph{partial rigidity sequence}. 

Once we know that a system is partially rigid, computing the largest value of $\delta$ provides valuable information on how strongly the system exhibits recurrent behavior. In particular, as was remarked by King in 1988 \cite[Proposition 1.13]{King_joining-rank_finite_mixing:1988}, this constant is invariant under measurable isomorphisms and increases under factor maps. We call this constant the \emph{partial rigidity rate}, we denote it $\delta_{\mu}$ and it is given by 
\begin{equation*}
    \delta_{\mu} = \sup \{ \delta >0 \mid \delta \text{ is a partial rigidity constant for some sequence } (n_k)_{k \in \N} \},
\end{equation*}
with the convention that $\delta_{\mu} = 0$ whenever the system is not partially rigid. 
There are only limited partially rigid systems for which that constant is known. One major case is \emph{rigid systems}, that is when $\delta_{\mu}=1$. Such systems have been well studied after Furstenberg and Weiss introduced them in \cite{Furstenberg_Weiss77}, see for instance \cite{Bergelson_delJunco_Lemanczyk_Rosenblatt_rigidity_nonrecurrence:2014,Coronel_Maass_Shao_seq_entropy_rigid:2009,Donoso_Shao_uniform_rigid_models:2017,Fayad_Kanigowski_rigidity_wm_rotation:2015,Glasner_Maon_rigidity_topological:1989}. 
The only non-rigid examples for which the partial rigidity rates are calculated are some specific substitution subshifts studied in \cite[Section 7]{donoso_maass_radic2023partial}.

Since minimal substitution subshifts are uniquely ergodic, it is natural to ask whether it is possible to construct a minimal, low-complexity system with more than one ergodic measure and distinct partial rigidity rates. Via an explicit construction, we fully resolve this question. More precisely, we show
\begin{theorem} \label{main thrm}
    For any natural number $d\geq 2$, there exists a minimal subshift with non-superlinear complexity that has $d$ distinct ergodic measures $\mu_0, \ldots, \mu_{d-1}$ for which the partial rigidity rates $0< \delta_{\mu_0} < \ldots < \delta_{\mu_{d-1}} < 1$ are also distinct. 
    
    Moreover, the partial rigidity sequence $(n_k)_{k \in \N}$ associated to each $\delta_{\mu_i}$ is the same for all $i \in \{0,\ldots, d-1\}$. 
\end{theorem}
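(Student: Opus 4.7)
The plan is to construct the desired subshift as an $\cS$-adic system $X_\sigma$ associated to a carefully chosen sequence of primitive proper substitutions $\sigma = (\sigma_n)_{n \in \N}$ on the common alphabet $\cA = \{0, 1, \ldots, d-1\}$. The substitution $\sigma_n$ will be designed so that, for each letter $i \in \cA$, the word $\sigma_n(i)$ contains a long run of the letter $i$ whose relative length is controlled by integer parameters $a_{n,i}$; these parameters will ultimately dictate both the ergodic measures and their partial rigidity rates. Properness and primitivity of the $\sigma_n$ guarantee minimality and non-superlinear word complexity of $X_\sigma$ by standard $\cS$-adic arguments, while finite alphabet rank bounds the number of ergodic measures by $d$.

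First, I would tune the parameters so that the normalized columns of the composed incidence matrices $M_{\sigma_0} \cdots M_{\sigma_n}$ converge to $d$ linearly independent probability vectors on $\cA$; by the standard correspondence between invariant measures of $\cS$-adic subshifts and such limits of normalized columns, each limit yields a distinct ergodic measure $\mu_i$, where $\mu_i$ is the one in which letter $i$ has maximal asymptotic frequency. Second, I would define a single sequence $(n_k)_{k \in \N}$ of offsets tied to the composed lengths $|\sigma_{[0,k-1]}(i^*)|$ for a reference letter $i^*$ (or a specific translate thereof), and use the Kakutani--Rokhlin partition at level $k$ to reduce the inequality $\liminf_k \mu_i(A \cap T^{-n_k}A) \geq \delta_i \mu_i(A)$ for arbitrary Borel $A$ to a purely combinatorial count: the asymptotic fraction of positions $j$ in $\sigma_{[0,k]}(i)$ at which the letter at position $j$ agrees with the letter at position $j+n_k$. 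The parameters $a_{n,i}$ are chosen so that this fraction equals an explicit $\delta_i \in (0,1)$ and the $d$ numbers $\delta_i$ are strictly increasing in $i$.

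The main obstacle is to prove the matching upper bound $\delta_{\mu_i} \leq \delta_i$, namely that no alternative sequence $(m_k)$ produces a strictly larger value of $\liminf_k \mu_i(A \cap T^{-m_k} A)/\mu_i(A)$ uniformly in $A$. This requires a careful analysis of every possible return-time sequence through the hierarchical $\cS$-adic word structure, extending the substitution-case analysis of \cite{donoso_maass_radic2023partial} to the $\cS$-adic setting. The subtlety specific to the non-uniquely-ergodic regime is that a single sequence $(m_k)$ can be genuinely favorable for $\mu_i$ while being unfavorable for $\mu_j$ with $j \neq i$; exploiting this is precisely what allows the rates to be distinct, but it also means that the coordinated choice of parameters $a_{n,i}$ must be balanced so that a single common sequence $(n_k)$ simultaneously realizes the supremum for every ergodic measure. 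Engineering this simultaneous optimality, while ensuring $0 < \delta_{\mu_0} < \cdots < \delta_{\mu_{d-1}} < 1$, is the technical heart of the construction.
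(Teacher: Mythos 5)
Your outline shares the paper's broad strategy (a primitive, finite-alphabet-rank, constant-length $\cS$-adic construction; ergodic measures read off from limits of normalized columns; a rigidity sequence tied to the tower heights $h^{(n)}$; non-superlinearity from finite alphabet rank), but the two points on which the theorem actually turns are left unresolved, and one design choice is likely to backfire. First, the matching upper bound $\delta_{\mu_i}\leq \delta_i$ is not an extension you can defer with a sentence: in the paper it occupies all of Section \ref{section computation partial rigidity} and requires decomposing each $\sigma_n$ as $\phi\circ\rho^n\circ\psi$, setting up recurrence formulas for the measures of complete words across levels (\cref{lemma complete rho,lemma complete psi,lemma complete phi}), and an inductive bounding argument (\cref{lemma i constant length bound,thrm combination bound max}) that reduces everything to complete words of length at most $\ell$. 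Without some concrete mechanism of this kind, ``analyze every possible return-time sequence'' is a restatement of the problem, not a proof; note that by \cref{theorem constant length delta mu} the supremum over $k$ of $\sum_{w\in\cC A_n^k}\mu^{(n)}(w)$ must be controlled for \emph{all} $k$, not just $k=2$.

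Second, your construction puts $d$ ergodic measures on a $d$-letter alphabet with $\mu_i$ giving letter $i$ maximal frequency. If the separation of measures is achieved by letting the frequency of letter $i$ under $\mu_i^{(n)}$ tend to $1$ (which is what criteria in the style of \cref{lemma BKK} require), then $\sum_{w\in\cC A^2}\mu_i^{(n)}(ww)\geq\mu_i^{(n)}(ii)\to 1$ and \cref{theorem constant length delta mu} forces $\delta_{\mu_i}=1$, contradicting the requirement $\delta_{\mu_{d-1}}<1$; if it does not tend to $1$, you lose the mechanism that identifies the $d$ ergodic measures and you have no way to compute the rates. The paper resolves exactly this tension by assigning each measure a two-letter sub-alphabet $\cA_i=\{a_i,b_i\}$ and gluing substitutions so that $\mu_i^{(n)}$ converges on $\cA_i^*$ to the unique invariant measure $\nu_i$ of a known non-rigid substitution $\tau_i=\zeta_{L^{2^{i+1}}}^{2^{d-i}}$ (\cref{thrm gluing technique}); the rates $\delta_{\mu_i}=\delta_{\nu_i}=\frac{L^{2^{i+1}}-1}{L^{2^{i+1}}+1}$ are then inherited, distinct, and strictly between $0$ and $1$ (\cref{cor delta smaler,prop very rigid family,thrm final result}). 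Your proposal contains no analogue of this ``imitation'' step, and without it neither the values $\delta_i$ nor their distinctness is established.
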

Constructing measures all of which share the same partial rigidity sequence is a key aspect because, in general, an invariant measure can be partially rigid for two different sequences $(n_k)_{k \in \N}$ and $(n'_k)_{k \in \N}$ and have different partial rigidity constants $\delta$ and $\delta'$ for each sequence. For instance, in \cite[Theorem 7.1]{donoso_maass_radic2023partial} it is proven that for the Thue-Morse substitution subshift equipped with its unique invariant measure $\nu$, $\delta_{\nu} = 2/3$ and its associated partial rigidity sequence is $(3 \cdot 2^n)_{n \in \N}$. Using a similar proof, the largest constant of partial rigidity for the sequence $(2^n)_{n \in \N}$ is $1/3$. In contrast, the discrepancy between the values in \cref{main thrm} is not due to quantifying along a different trajectory, but rather that for each measure the returning mass takes on a different value.

The system constructed to prove \cref{main thrm} is an $\cS$-adic subshift, that is a symbolic system formed as a limit of morphisms $\boldsymbol \sigma = (\sigma_n \colon A_{n+1}^* \to A_n^*)_{n \in \N}$ (see \cref{section prelimanries} for the precise definitions). We introduce a novel technique that allows us to build minimal $\cS$-adic subshift with $d$ ergodic measures, where each ergodic measure ``behaves like'' a substitution subshift for which we already know its partial rigidity rate. The idea is that the measures of the cylinder sets ``closely approximate'' the values assigned by the unique invariant measure of the substitution subshift that is ``imitating''. For the precise statement, see \cref{thrm gluing technique}. This gluing technique is of interest on its own, as it gives a general way for controlling distinct ergodic measures in some specific $\cS$-adic subshift.

For each ergodic measure $\mu_i$, with $i \in \{0,\ldots,d-1\}$, the gluing technique gives us a lower bound for the partial rigidity rate (see \cref{cor delta smaler}). The lower bound corresponds to the partial rigidity rate associated to the uniquely ergodic system that the measure $\mu_i$ is ``imitating''. In \cref{section computation partial rigidity}, we restrict to a specific example in which that lower bound is achieved. In that section, we prove that the number of morphisms needed for building the $\cS$-adic subshift can be reduced to three. 
Combining results from Sections \ref{section gluing technique} and \ref{section computation partial rigidity}, we complete the proof of \cref{main thrm}. An extended version of the theorem that includes the values of $\delta_{\mu_i}$ for $i \in \{0, \ldots,d-1\}$ and the partial rigidity sequence is stated in \cref{thrm final result}.

\textbf{Acknowledgments.}
The author thanks B. Kra for her careful reading and helpful suggestions on the earlier versions of this paper. He is also grateful to A. Maass and S. Donoso for their insights in the early stages of this project, and extends his thanks to F. Arbulu for providing valuable references. Special thanks to S. Petite, who, during the author's first visit to the UPJV in Amiens, asked whether an example with multiple partial rigidity rates, such as the one described in this paper, could be constructed.

\section{Preliminaries and notation} \label{section prelimanries}


\subsection{Topological and symbolic dynamical systems}

In this paper, a {\em topological dynamical system} is a pair $(X,T)$, where $X$ is a compact metric space and $T \colon X \to X$ is a homeomorphism. We say that $(X,T)$ is {\em minimal} if for every $x \in X$ the orbit $\{T^n x: n\in \Z\}$ is dense in $X$. A continuous and onto map $\pi \colon X_1 \to X_2$  between two topological dynamical systems $(X_1, T_1)$ and $(X_2,T_2)$ is a \emph{factor map} if for every $x \in X_1$, $T_2 \circ \pi (x) = \pi \circ T_1 (x) $. 

We focus on a special family of topological dynamical system, symbolic systems. To define them, let $A$ be a finite set that we call {\em alphabet}. The elements of $A$ are called {\em letters}. For $\ell \in \N$, the set of concatenations of $\ell$ letters is denoted by $A^{\ell}$ and $w = w_1 \ldots w_{\ell} \in A^{\ell}$ is a {\em word} of length $\ell$. The length of a word $w$ is denoted by $|w|$. We set $A^* = \bigcup_{n \in \N} A^{\ell}$ and by convention, $A^0 = \{ \varepsilon \}$ where $\varepsilon$ is the {\em empty word}. 

For a word $w = w_1  \ldots w_{\ell}$ and two integers $1 \leq i < j \leq \ell$, we write $w_{[i, j+1)} = w_{[i, j]} = w_i  \ldots w_j$. We say that $u$ {\em appears} or {\em occurs} in $w $ if there is an index $ 1 \leq i \leq |w|$ such that $u=w_{[i,i+|u|)}$ and we denote this by $u \sqsubseteq w$. The index $i$ is an {\em occurrence} of $u$ in $w$ and $|w|_u$ denotes the number of (possibly overleaped) occurrences of $u$ in $w$. We also write $\freq(u,w) = \frac{|w|_u}{|w|}$, the \emph{frequency of} $u$ \emph{in} $w$. 

Let $A^{\Z}$ be the set of two-sided sequences $(x_n)_{n \in \Z}$, where $x_n \in A$ for all $n \in \Z$. Like for finite words, for $x \in A^{\Z}$ and $- \infty < i < j < \infty$ we write $x_{[i,j]}= x_{[i,j+1)}$ for the finite word given by $x_ix_{i+1} \ldots x_j$. The set $A^{\Z}$ endowed with the product topology is a compact and metrizable space. The {\em shift map} $S\colon A^{\Z} \to A^{\Z}$ is the homeomorphism defined by $S((x_n)_{n \in \Z})= (x_{n+1})_{n \in \Z}$. Notice that, the collection of {\em cylinder sets} $\{ S^j[w] \colon w \in A^*, j \in \Z \}$ where $[w] = \{ x \in A^{\Z} \colon x_{[0, |w|) } = w\} $, is a basis of clopen subsets for the topology of $A^{\Z}$.  

A {\em subshift} is a topological dynamical system $(X,S)$, where $X$ is a closed and $S$-invariant subset of $A^{\Z}$. In this case the topology is also given by cylinder sets, denoted $[w]_X = [w] \cap X$, but when there is no ambiguity we just write $[w]$. Given an element $x \in X$, the \emph{language} $\cL(x)$ is the set of all words appearing in $x$ and $\cL(X) = \bigcup_{x \in X} \cL(x)$. Notice that $[w]_X \neq \emptyset$ if and only if $w \in \cL(X)$. Also, $(X,S)$ is minimal if and only if $\cL(X)=\cL(x)$ for all $x \in X$.

  Let $A$ and $B$ be finite alphabets and $\sigma\colon A^* \to B^*$ be a \emph{morphism} for the concatenation, that is $\sigma(uw) = \sigma(u)\sigma(w)$ for all $u,w \in A^*$. A morphism $\sigma\colon A^* \to B^*$ is completely determined by the values of $\sigma(a)$ for every letter $a \in A$. We only consider \emph{non-erasing} morphisms, that is $\sigma(a) \neq \varepsilon$ for every $a \in A$, where $\varepsilon$ is the empty word in $B^*$. A morphism $\sigma \colon A^* \to A^*$ is called a \emph{substitution} if for every $a \in A$, $\displaystyle \lim_{n \to \infty} |\sigma^n(a)| = \infty$.

A \emph{directive sequence} $\boldsymbol \sigma = (\sigma_n\colon A^*_{n+1} \to A^*_n )_{n \in \N}$ is a sequence of (non-erasing) morphisms.  Given a directive sequence $\boldsymbol \sigma$ and $n \in \N$, define $\cL^{(n)}(\boldsymbol \sigma)$, the \emph{language of level} $n$ \emph{associated to} $\boldsymbol \sigma $ by
\begin{equation*}
    \cL^{(n)}(\boldsymbol \sigma) = \{ w \in A_n^* : w \sqsubseteq \sigma_{[n,N)}(a) \text{ for some } a \in A_N \text{ and } N>n \}
\end{equation*}
where $\sigma_{[n,N)} = \sigma_n \circ \sigma_{n+1} \circ \ldots \circ \sigma_{N-1}$. For $n \in \N$, we define $X_{\boldsymbol \sigma}^{(n)}$, the $n$-\emph{th level subshift generated by} $\boldsymbol \sigma$, as the set of elements $x \in A_n^{\Z}$ such that $\cL(x) \subseteq \cL^{(n)}(\boldsymbol \sigma)$. For the special case $n=0$, we write $X_{\boldsymbol \sigma}$ instead of $X_{\boldsymbol \sigma}^{(0)}$ and we call it the $\cS$-\emph{adic subshift} generated by $\boldsymbol \sigma$.

A morphism $\sigma \colon A^* \to B^*$ has a \emph{composition matrix} $M(\sigma) \in \N^{B \times A} $ given by $M(\sigma)_{b,a} = |\sigma(a)|_b$ for all $b \in B$ and $a \in A$. If $\tau \colon B^* \to C^*$ is another morphism, then $M(\tau \circ \sigma) = M (\tau) M(\sigma)$. Therefore, for a substitution, $\sigma\colon A^* \to A^*$, $M(\sigma^2) = M(\sigma)^2$. 
We say that $\boldsymbol \sigma$ is {\em primitive} if for every $n \in \N$ there exists $k \geq 1$ such that the matrix $M (\sigma_{[n,n+k]}) = M(\sigma_n)M(\sigma_{n+1}) \cdots M( \sigma_{n+k})$ has only positive entries. When $\boldsymbol \sigma$ is primitive, then for every $n \in \N$ $(X_{\boldsymbol \sigma}^{(n)},S)$ is minimal and  $\cL(X^{(n)}_{\boldsymbol \sigma}) = \cL^{(n)}(\boldsymbol \sigma)$.

When $\boldsymbol \sigma$ is the constant directive sequence $\sigma_n = \sigma$ for all $n \in \N$, where $\sigma \colon A^* \to A^*$ is a substitution, then $X_{\boldsymbol \sigma}$ is denoted $X_{\sigma}$ and it is called \emph{substitution subshift}. Similarly $\cL(\boldsymbol \sigma)$ is denoted $\cL(\sigma)$. Also if in that context $\boldsymbol \sigma$ is primitive, we say that the substitution $\sigma$ itself is primitive, which is equivalent to saying that the composition matrix $M(\sigma)$ is primitive. We also say that the substitution $\sigma$ is positive if $M(\sigma)$ only have positive entries. By definition, every positive substitution is also primitive.

A morphism $\sigma\colon A^* \to B^*$ has constant length if there exists a number $\ell \geq 1$ such that $|\sigma(a)| = \ell$ for all $a \in A$. In this case, we write $| \sigma| = \ell$. More generally, a directive sequence $\boldsymbol \sigma = (\sigma_n\colon A^*_{n+1} \to A^*_n)_{n \in \N}$ is of \emph{constant-length} if each morphism $\sigma_n$ is of constant length. Notice that we do not require that $|\sigma_n| = |\sigma_m|$ for distinct $n,m\in \N$.

We define the \emph{alphabet rank} $AR$ of $\boldsymbol \sigma = (\sigma_n\colon A^*_{n+1} \to A^*_n )_{n \in \N}$ as 
$\displaystyle AR(\boldsymbol \sigma) = \liminf_{n \to \infty} |A_n|$. 
Having finite alphabet rank has many consequences, for instance if $AR(\boldsymbol \sigma) < \infty$ then $X_{\boldsymbol \sigma}$ has zero topological entropy.

For a general subshift $(X, S)$, let $p_X \colon \N \to \N$ denote \emph{the word complexity function} of $X$ given by $p_X (n) = |\cL_n (X)|$ for all $n \in \N$. Here $\cL_n(X) = \{ w \in \cL(X) \colon |w|=n\}$. If $\displaystyle \liminf_{n \to \infty} \frac{p_X(n)}{n} = \infty$ we say that $X$ has \emph{superlinear complexity}. Otherwise we say $X$ has \emph{non-superlinear complexity}.

 We say that a primitive substitution $\tau \colon A^* \to A^*$ is \emph{right prolongable} (resp. \emph{left prolongable}) on $u \in A^*$ if $\tau(u)$ starts (resp. ends) with $u$. If, for every letter $a \in A$, $\tau \colon A^* \to A^*$ is left and right prolongable on $a$, then $\tau \colon A^* \to A^*$ is said to be \emph{prolongable}. A word $w=w_1 \ldots w_{\ell}\in \cA^*$ is \emph{complete} if $\ell \geq 2$ and $w_1 = w_{\ell}$. Notice that if a substitution $\tau \colon A^* \to A^*$ is primitive and prolongable, then $\tau(a)$ is a complete word for every $a \in A$. If $W$ is a set of words, then we denote
\begin{equation} \label{eq complete W}
    \cC W = \{w \in W \colon |w| \geq 2, w_1 = w_{|w|} \}.
\end{equation}
the set of complete words in $W$. In particular, for $k \geq2$, $\cC A^k$ is the set of complete words of length $k$ with letters in $A$, for example, $\cC\{a,b\}^3= \{aaa,aba,bab,bbb\}$.

Finally, when the alphabet has two letters $\cA= \{a,b\}$, the \emph{complement} of a word $w = w_1 \ldots w_{\ell} \in \cA^*$ denoted $\overline{w}$ is given by $\overline{w}_1 \ldots \overline{w}_{\ell}$ where $\overline{a}= b$ and $\overline{b}=a$. A morphism $\tau \colon \cA^* \to \cA^*$ is said to be a mirror morphism if $\tau(\overline{w}) = \overline{\tau(w)}$ (the name is taken from \cite[Chapter 8.2]{Queffelec1987} with a slight modification). 

\subsection{Invariant measures} \label{section invariant measures}

A \emph{measure preserving system} is a tuple $(X,\mathcal{X},\mu,T)$, where $(X,\mathcal{X},\mu)$ is a probability space and $T\colon X\to X$ is a measurable and measure preserving transformation. That is, $T^{-1}A\in\mathcal{X}$ and $\mu(T^{-1}A)=\mu(A)$ for all $A\in \cX$, and we say that $\mu$ is $T$\emph{-invariant}. An invariant measure $\mu$ is said to be {\em ergodic} if whenever $A \subseteq X$ is measurable and $\mu(A\Delta T^{-1}A)=0$, then $\mu(A)=0$ or $1$.   

 Given a topological dynamical system $(X,T)$, we denote $\cM(X,T)$ (resp. $\cE(X,T)$) the set of Borel $T$-invariant probability measures (resp. the set of ergodic probability measures). For any topological dynamical system, $\cE(X,T)$ is nonempty and when $\cE(X,T) = \{ \mu\}$ the system is said to be {\em uniquely ergodic}.

If $(X,S)$ is a subshift over an alphabet $A$, then any invariant measure $\mu \in \cM(X,S)$ is uniquely determined by the values of $\mu([w]_X)$ for $w \in \cL(X)$. Since $X \subset A^{\Z}$, $\mu \in \cM(X,S)$ can be extended to $A^{\Z}$ by $\Tilde{\mu}( B) = \mu ( B \cap X) $ for all $B \subset A^{\Z} $ measurable. In particular, $\Tilde{\mu}([w]) = \mu ([w]_{X})$ for all $w \in A^*$. We use this extension many times, making a slight abuse of notation and not distinguishing between $\mu$ and $\Tilde{\mu}$. Moreover, for $w \in A^*$, since there is no ambiguity with the value of the cylinder set we write $\mu(w)$ instead of $\mu([w])$. This can also be done when we deal with two alphabets $A \subset B$, every invariant measure $\mu$ in $A^{\Z}$ can be extended to an invariant measure in $B^{\Z}$, where in particular, $\mu(b) =0 $ for all $b \in B\backslash A$.  

A sequence of non-empty subsets of the integers, $\boldsymbol{\Phi}= (\Phi_n)_{n\in \N} $ is a F\o lner sequence if for all $t \in \Z$, $\displaystyle \lim_{n \to \infty} \frac{|\Phi_n \Delta (\Phi_n+t)|}{|\Phi_n |} = 0$. Let $(X,T)$ be a topological system and let $\mu$ be an invariant measur, an element $x \in X$ is said to be \emph{generic} along $\boldsymbol \Phi$ if for every continuous function $f \in C(X)$
\begin{equation*}
    \lim_{n \to \infty} \frac{1}{|\Phi_n| } \sum_{k \in \Phi_n} f(Tx) = \int_X f d\mu. 
\end{equation*}
Every point in a minimal system is generic for some F\o lner sequence $\boldsymbol \Phi$, more precisely
\begin{proposition} \label{prop furstenberg generic}\cite[Proposition 3.9]{Furstenbergbook:1981}
    Let $(X,T)$ be a minimal system and $\mu$ an ergodic measure. Then for every $x \in X$ there exists sequences $(m_n)_{n \in \N}, (m'_n)_{n \in \N} \subset \N$ such that $m_n < m'_n$ for every $n \in \N$  and $\displaystyle \lim_{n \to \infty} m'_n - m_n = \infty$ such that $x$ is generic along $\boldsymbol \Phi = (\{m_n , \ldots, m'_n\})_{n \in \N}$.
\end{proposition}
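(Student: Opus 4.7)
The plan is to combine the Birkhoff ergodic theorem with the minimality of $(X,T)$. Since $X$ is a compact metric space, $C(X)$ is separable; fix a countable dense subset $\{f_i\}_{i \in \N} \subset C(X)$. By Birkhoff's ergodic theorem applied to each $f_i$ together with ergodicity of $\mu$, the set of points $y \in X$ satisfying $\frac{1}{N}\sum_{j=0}^{N-1} f_i(T^j y) \to \int f_i\, d\mu$ for every $i \in \N$ has full $\mu$-measure, and is in particular nonempty. Fix such a generic point $y$; by density of $\{f_i\}$ in the sup norm and the bound $\|f\|_\infty < \infty$ for any $f \in C(X)$, the convergence above extends to all of $C(X)$.

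Minimality of $(X,T)$ guarantees that the forward orbit of $x$ is dense in $X$, so we can pick positive integers $t_1 < t_2 < \cdots$ with $d(T^{t_k} x, y) \to 0$. The main step is a diagonal construction producing $m_k$ and $m'_k$. For each $k \in \N$, I first choose $N_k \geq k$ so large that $\bigl| \frac{1}{N_k}\sum_{j=0}^{N_k-1} f_i(T^j y) - \int f_i\, d\mu \bigr| < 1/k$ for all $i \leq k$. Since $X$ is compact, each iterate $T^j$ is uniformly continuous and each $f_i$ is uniformly continuous, so I may then select an index $k'\geq k$ (with $k'$ strictly increasing in $k$) such that the point $T^{t_{k'}} x$ is close enough to $y$ to guarantee $\bigl| \frac{1}{N_k}\sum_{j=0}^{N_k-1} f_i(T^{t_{k'}+j} x) - \frac{1}{N_k}\sum_{j=0}^{N_k-1} f_i(T^j y) \bigr| < 1/k$ for all $i \leq k$. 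I then set $m_k = t_{k'}$ and $m'_k = t_{k'} + N_k - 1$.

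By construction $m_k < m'_k$ and $m'_k - m_k = N_k - 1 \to \infty$. The sets $\Phi_k = \{m_k, \ldots, m'_k\}$ form a F\o lner sequence because they are intervals of integers whose length tends to infinity, which makes the boundary condition $|\Phi_k \Delta (\Phi_k + t)|/|\Phi_k| \to 0$ automatic. Combining the two estimates above, for every fixed $i$ and all $k$ large enough, $\bigl| \frac{1}{|\Phi_k|}\sum_{n \in \Phi_k} f_i(T^n x) - \int f_i\, d\mu \bigr| < 2/k$. Hence genericity along $\boldsymbol{\Phi}$ holds on the countable dense family $\{f_i\}$, and extends to all of $C(X)$ by density and uniform boundedness.

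The main technical obstacle is the diagonal step: as the window length $N_k$ grows, the required proximity $d(T^{t_{k'}} x, y)$ must shrink enough to compensate for the cumulative modulus of continuity of the compositions $f_i \circ T^j$ for $j$ up to $N_k - 1$. This is resolved by exploiting the freedom in choosing $t_{k'}$ arbitrarily close to $y$ via minimality, and by observing that at each step $k$ only finitely many functions $f_1, \ldots, f_k$ and finitely many iterates $T^0, \ldots, T^{N_k-1}$ must be controlled simultaneously, so the needed uniform continuity moduli are taken over a finite set.
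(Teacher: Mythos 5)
Your argument is correct. The paper does not prove this statement---it is quoted from Furstenberg's book---and your proof is essentially the standard one behind that citation: take a Birkhoff-generic point $y$ for the ergodic measure, use minimality to bring some forward iterate $T^{t}x$ as close to $y$ as needed, and transfer the averages over the window $\{t,\ldots,t+N_k-1\}$ by uniform continuity of the finitely many functions $f_i\circ T^j$ involved at each stage; the only point worth making explicit is that density of the \emph{forward} orbit follows from minimality because the $\omega$-limit set of $x$ is a nonempty closed invariant set, hence all of $X$.
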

In particular, for an $\cS$-adic subshift with primitive directive sequence $\boldsymbol \sigma = (\sigma_n \colon A_{n+1}^* \to A_n^*)_{n \in \N}$, when the infinite word $\boldsymbol w = \displaystyle \lim_{n \to \infty} \sigma_0 \circ \sigma_1 \circ \cdots \circ \sigma_{n-1}(a_n)$ is well-defined then every invariant measure $\mu \in \cM(X_{\boldsymbol \sigma},S)$ is given by 
\begin{equation} \label{equation empiric measure}
    \mu(u) =  \lim_{n \to \infty} \frac{|\boldsymbol{w}_{[m_n,m'_n]} |_u }{m'_n-m_n +1} = \lim_{n \to \infty} \freq(u,\boldsymbol{w}_{[m_n,m'_n]})  \quad \forall u \in \cL(X_{\boldsymbol \sigma}),  
\end{equation}
for some $(m_n)_{n \in \N}, (m'_n)_{n \in \N} \subset \N$ as before. Notice that such infinite word $\boldsymbol w$ is well-defined for example when  $A_n = A$, $a_n = a$ and $\sigma_n \colon A^* \to A^*$ is prolongable, for all $n \in \N$, where $A$ and $a \in A$ are a fixed alphabet and letter respectively. Those are the condition for the construction of the system announced in \cref{main thrm}.

We remark that for a primitive substitution, $\sigma \colon A^* \to A^*$ the substitution subshift $(X_{\sigma},S)$ is uniquely ergodic and the invariant measure is given by any limit of the form \eqref{equation empiric measure}.

\subsection{Partial rigidity rate for $\cS$-adic subshifts}

Every $\cS$-adic subshift can be endowed with a natural sequence of Kakutani-Rokhlin partitions see for instance \cite[Lemma 6.3]{Berthe_Steiner_Thuswaldner_Recognizability_morphism:2019}, \cite[Chapter 6]{Durand_Perrin_Dimension_groups_dynamical_systems:2022} or \cite[section 5]{donoso_maass_radic2023partial}. To do this appropriately, one requires \emph{recognizability} of the directive sequence $\boldsymbol \sigma = (\sigma_n \colon A_{n+1}^* \to A_n^*)_{n \in \N} $, where we are using the term recognizable as defined in \cite{Berthe_Steiner_Thuswaldner_Recognizability_morphism:2019}. We do not define it here, but if every morphism $\sigma_n \colon A_{n+1}^* \to A_n^*$ is left-permutative, that is the first letter of $\sigma_n(a)$ is distinct from the first letter of $\sigma_n(a')$ for all $a \neq a'$ in $A_n$, then the directive sequence is recognizable. In this case we say that the directive sequence $\boldsymbol \sigma$ itself is left-permutative. If $\tau \colon A^* \to A^*$ is prolongable, then it is left-permutative. 

Once we use the Kakutani-Rokhlin partition structure, $X^{(n)}_{\boldsymbol \sigma}$ can be identified as the induced system in the $n$-th basis and for every invariant measure $\mu'$ in $X^{(n)}_{\boldsymbol \sigma}$, there is an invariant measure $\mu$ in $X_{\boldsymbol \sigma}$ such that $\mu'$ is the induced measure of $\mu$ in $X^{(n)}_{\boldsymbol \sigma}$. We write $ \mu' = \mu^{(n)}$ and this correspondence is one-to-one. This is a crucial fact for computing the partial rigidity rate for an $\cS$-adic subshift, for instance, if $\boldsymbol \sigma$ is a directive sequence of constant-length, $\delta_{\mu} = \delta_{\mu^{(n)}}$ for all $\mu \in \cE(X_{\boldsymbol \sigma}, S)$ and $n \geq 1$ (see \cref{theorem constant length delta mu}). Since the aim of this paper is building a specific example, we give a way to characterize $\mu^{(n)}$ for a more restricted family of $\cS$-adic subshift that allows us to carry out computations.

In what follows, we restrict the analysis to less general directive sequences $\boldsymbol \sigma$. To do so, from now on, $\cA$ always denotes the two letters alphabet $\{a,b\}$. Likewise, for $d \geq 2$, $\cA_i = \{a_i, b_i\}$ for $i \in \{0, \ldots, d-1\}$ and $ \Lambda_d= \bigcup_{i=0}^{d-1} \cA_{i}$.

We cite a simplified version of \cite[Theorem 4.9]{bezuglyi_karpel_kwiatkowski2019exact}, the original proposition is stated for Bratelli-Vershik transformations, but under recognizability, it can be stated for $\cS$-adic subshifts, see \cite[Theorem 6.5]{Berthe_Steiner_Thuswaldner_Recognizability_morphism:2019}.

\begin{lemma} \label{lemma BKK}
    Let $\boldsymbol \sigma = (\sigma_n \colon \Lambda_d^* \to \Lambda_d^*)_{n \geq 1} $ be a recognizable constant-length and primitive directive sequence, such that for all $i \in \{0, \ldots, d-1\}$,
        \begin{equation} \label{eqa} 
            \lim_{n \to \infty}\frac{1}{|\sigma_n|} \sum_{j \neq i } |\sigma_n(a_i)|_{a_j} + |\sigma_n(a_i)|_{b_j} + |\sigma_n(b_i)|_{a_j} + |\sigma_n(b_i)|_{b_j} = 0
        \end{equation}
         
        \begin{equation} \label{eqc} 
            \sum_{n \geq 1} \left( 1- \min_{c \in \cA_i} \frac{1}{|\sigma_n|} \left( |\sigma_n(c)|_{a_i} + |\sigma_n(c)|_{b_i} \right) \right) < \infty
        \end{equation}
        \begin{equation} \label{eqd} 
           \text{and } \quad \lim_{n \to \infty} \frac{1}{| \sigma_n|} \max_{c,c' \in \cA_i} \sum_{d \in \Lambda_d} | |\sigma_n(c)|_d - |\sigma_n(c')|_d | =0.
        \end{equation}
Then the system $(X_{\boldsymbol \sigma},S)$ has $d$ ergodic measures $\mu_0, \ldots, \mu_{d-1}$. 
    
    Moreover, for $N \in \N$ sufficiently large, the measures $\mu^{(n)}_i$ are characterized by $\mu^{(n)}_i(a_i) + \mu^{(n)}_i (b_i) = \max \{ \mu' (a_i)+ \mu'(b_i) \colon \nu \in \cM(X_{\boldsymbol \sigma}^{(n)},S) \}$ for all $n \geq N$. Also, for all $j \neq i$, $$ \lim_{n \to \infty} \mu_i^{(n)}(a_j) + \mu_i^{(n)}(b_j) = 0.$$
\end{lemma}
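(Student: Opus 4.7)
The plan is to apply the cited result \cite[Theorem 4.9]{bezuglyi_karpel_kwiatkowski2019exact} by passing to a Bratteli-Vershik model of $(X_{\boldsymbol \sigma},S)$. First, I would invoke recognizability: since $\boldsymbol\sigma$ is recognizable (which is exactly the setting in which \cite[Theorem 6.5]{Berthe_Steiner_Thuswaldner_Recognizability_morphism:2019} applies), the natural Kakutani-Rokhlin partitions indexed by $\Lambda_d$ at each level assemble into a Bratteli diagram $B$ whose incidence matrices at level $n$ are precisely the composition matrices $M(\sigma_n)$, and the Vershik map on the path space of $B$ is measurably conjugate to $(X_{\boldsymbol \sigma},S)$. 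Under this conjugacy, $\cM(X_{\boldsymbol \sigma},S)$ is in bijection with the invariant measures on the Bratteli-Vershik system, and the induced measure $\mu^{(n)}$ on $X_{\boldsymbol \sigma}^{(n)}$ corresponds to the normalized measure on the towers above level $n$.

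Next, I would verify that the three hypotheses \eqref{eqa}, \eqref{eqc}, \eqref{eqd} translate directly into those of \cite[Theorem 4.9]{bezuglyi_karpel_kwiatkowski2019exact}. The partition $\Lambda_d = \bigsqcup_{i=0}^{d-1} \cA_i$ provides the required decomposition of vertices at each level of $B$ into $d$ classes. Condition \eqref{eqa} is the ``off-diagonal vanishing'' ensuring that for each $i$ the mass sent from $\cA_i$ into $\bigsqcup_{j \neq i} \cA_j$ is negligible in relative frequency. Condition \eqref{eqc} is the Borel-Cantelli type summability that prevents the class $\cA_i$ from being asymptotically absorbed, guaranteeing that each class genuinely supports its own measure. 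Condition \eqref{eqd} is the within-class column equidistribution, which forces the normalized sub-matrices along each $\cA_i$ to converge to a rank-one object, and hence provides a \emph{unique} ergodic measure per class rather than a simplex.

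Applying \cite[Theorem 4.9]{bezuglyi_karpel_kwiatkowski2019exact} then produces exactly $d$ ergodic measures $\mu_0,\ldots,\mu_{d-1}$, one per class. The maximum characterization follows from the construction: for $n$ large enough that the partial products $M(\sigma_{[1,n)})$ have sufficiently separated the $d$ classes (this is where \eqref{eqc} supplies a quantitative gap), the measure $\mu_i^{(n)}$ is uniquely characterized at level $n$ as the one placing maximal total mass on the class $\cA_i$, which is precisely $\mu'(a_i)+\mu'(b_i)$. The concentration statement $\lim_{n\to\infty}\mu_i^{(n)}(a_j)+\mu_i^{(n)}(b_j)=0$ for $j\neq i$ then reads off from \eqref{eqa}: the mass that $\mu_i^{(n)}$ puts on $\cA_j$ at level $n$ is, after one application of $\sigma_n$ and using invariance, bounded above by the left-hand side of \eqref{eqa}, which tends to zero.

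The main obstacle I anticipate is purely bookkeeping: ensuring the translation between BKK's vertex-class formulation and the $\cS$-adic letter-frequency formulation preserves the precise content of the two quantitative conclusions. Specifically, one must check that ``maximal mass on class $\cA_i$'' in the Bratteli-Vershik sense corresponds exactly to maximizing $\mu'(a_i)+\mu'(b_i)$ over $\cM(X_{\boldsymbol\sigma}^{(n)},S)$, and that the threshold $N$ appearing here coincides with the one produced by BKK. Both should follow from the fact that $|\sigma_n|$-normalization intertwines the two pictures, but the identification deserves to be carried out explicitly.
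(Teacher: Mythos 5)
The paper gives no independent proof of this lemma: it is quoted as a simplified version of \cite[Theorem 4.9]{bezuglyi_karpel_kwiatkowski2019exact}, transferred to the $\cS$-adic setting via the recognizability correspondence of \cite{Berthe_Steiner_Thuswaldner_Recognizability_morphism:2019}, which is exactly the route you propose, so your approach coincides with the paper's. The one caveat is your heuristic for the final concentration statement: $\lim_{n\to\infty}\mu_i^{(n)}(a_j)+\mu_i^{(n)}(b_j)=0$ is driven by \eqref{eqc} forcing $\mu_i^{(n)}(a_i)+\mu_i^{(n)}(b_i)\to 1$ (so the complementary mass must vanish), not by a single application of \eqref{eqa}, since the off-diagonal bound in \eqref{eqa} does not control the dominant contribution to the mass on $\cA_j$, namely the mass that class $\cA_j$ retains from itself under $\sigma_n$.
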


 Whenever $\boldsymbol \sigma = (\sigma_n \colon A_{n+1}^* \to A_n^*)_{n \in \N}$ is  a constant-length  directive sequence, we write $h^{(n)} = |\sigma_{[0,n)}|$ where we recall that $\sigma_{[0,n)} = \sigma_0 \circ \sigma_1 \circ \cdots \circ \sigma_{n-1}$.

\begin{theorem} \cite[Theorem 7.1]{donoso_maass_radic2023partial}  \label{theorem constant length delta mu}
Let $\boldsymbol \sigma = (\sigma_n \colon A_{n+1}^* \to A_n^*)_{n \in \N}$  be a recognizable, constant-length and primitive directive sequence. Let $\mu$ be an $S$-invariant ergodic measure on $X_{\boldsymbol \sigma}$. Then
\begin{equation} \label{eq Toeplitz delta mu}
\delta_{\mu} = \lim_{n \to \infty }  \sup_{k \geq 2} \left\{ \sum_{w \in \cC A^k_n}  \mu^{(n)} (w) \right\},
\end{equation}
where $\cC A^k_n$ is defined in \eqref{eq complete W}. Moreover, if $(k_n)_{n \in \N}$ is a sequence of integers (posibly constant), with $k_n \geq 2$ for all $n \in \N$, such that 
\begin{equation} \label{eq constant length p rig rates}
\delta_{\mu} = \lim_{n \to \infty }   \left\{ \sum_{w \in \cC A_n^{k_n
}}  \mu^{(n)} (w) \right\},
\end{equation}
then the partial rigidity sequence is $((k_n-1) h^{(n)})_{n \in \N} $.
\end{theorem}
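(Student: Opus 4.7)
The plan is to exploit the Kakutani-Rokhlin partition structure afforded by recognizability of $\boldsymbol\sigma$. At each level $n$, $(X_{\boldsymbol\sigma},S)$ decomposes into towers $\{S^j B^{(n)}_a : a \in A_n,\ 0 \leq j < h^{(n)}\}$ of uniform height $h^{(n)}$ (using the constant-length hypothesis), with base $B^{(n)} = \bigsqcup_a B^{(n)}_a$ carrying the induced system conjugate to $(X^{(n)}_{\boldsymbol\sigma},S)$ and induced measure proportional to $\mu^{(n)}$. The key geometric observation is that $S^{(k-1)h^{(n)}}$ sends the $j$-th floor of tower $a$ onto the $j$-th floor of tower $a'$, where $a'$ is the letter appearing $k-1$ positions later in the induced $X^{(n)}_{\boldsymbol\sigma}$-coordinate; these two floors coincide exactly when the length-$k$ word read at level $n$ starting from $a$ is complete, so the total returning mass equals $\alpha_{n,k} := \sum_{w \in \cC A^k_n}\mu^{(n)}(w)$.

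For the lower bound, fix $n$ and choose $k_n \geq 2$ nearly attaining $\sup_{k\geq 2}\alpha_{n,k}$, and set $N_n = (k_n-1)h^{(n)}$. For any $A$ that is a union of partition elements at level $n$, the floor-by-floor count above yields $\mu(A \cap S^{-N_n} A) \geq \alpha_{n,k_n}\,\mu(A)$. Because the Kakutani-Rokhlin partitions generate the Borel $\sigma$-algebra, arbitrary measurable $A$ are well approximated at sufficiently high level, so $(N_n)_{n\in\N}$ is a valid partial rigidity sequence with constant $\liminf_n \alpha_{n,k_n}$, giving $\delta_\mu \geq \liminf_{n\to\infty}\sup_{k\geq 2}\alpha_{n,k}$. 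The fact that the outer expression is an honest limit, rather than just a $\liminf$, comes from a monotonicity argument: the partition at level $n+1$ refines the one at level $n$, complete words at level $n$ lift under $\sigma_n$ to complete words at level $n+1$, and $\mu^{(n+1)}$ projects onto $\mu^{(n)}$ coherently. Applied with $k=k_n$ as in \eqref{eq constant length p rig rates}, this construction yields precisely the sequence $((k_n-1)h^{(n)})_{n\in\N}$, proving the moreover part.

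For the upper bound, let $(n_j)_{j\in\N}$ be an arbitrary partial rigidity sequence with constant $\delta$, fix $n$, and write $n_j = (k_j-1)h^{(n)} + r_j$ with $0 \leq r_j < h^{(n)}$. Testing the partial rigidity inequality against sets $A$ built from individual floors of the level-$n$ towers, the tower geometry forces the contribution to $\mu(A \cap S^{-n_j}A)$ to come from points whose induced level-$n$ trajectory produces a complete word of length $k_j$ or $k_j+1$, plus a boundary term proportional to $r_j/h^{(n)}$ that becomes negligible when the test floors are thin. Taking $\liminf_j$ gives $\delta \leq \sup_{k\geq 2}\alpha_{n,k}$ for every $n$, and letting $n \to \infty$ closes the pinch. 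The main obstacle is precisely this boundary analysis: one must argue that the offset $r_j$ cannot conspire with the level-$n$ word structure to inflate the returning mass beyond the complete-word count, and that rounding $n_j$ to the nearest multiple of $h^{(n)}$ introduces only an error that vanishes uniformly in the combined double limit — this is where the constant-length hypothesis plays its essential role, as it forces all towers at a given level to have the same height, so that the boundary defect depends only on the single parameter $r_j$.
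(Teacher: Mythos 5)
First, a point of order: the paper does not prove this statement at all --- it is imported verbatim as \cite[Theorem 7.1]{donoso_maass_radic2023partial}, so there is no internal proof to compare against. Your sketch does follow the same general framework as the cited reference (Kakutani--Rokhlin towers of uniform height $h^{(n)}$, returns under $S^{(k-1)h^{(n)}}$ governed by complete words at level $n$), so the skeleton is the right one. The geometric observation that $S^{(k-1)h^{(n)}}$ preserves the floor index and shifts the induced coordinate by $k-1$ is correct and is indeed the heart of the matter.

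However, there is a genuine gap in your lower bound. The claim that for $A$ a union of level-$n$ floors one has $\mu(A \cap S^{-(k-1)h^{(n)}}A) \geq \alpha_{n,k}\,\mu(A)$ with $\alpha_{n,k} = \sum_{w \in \cC A_n^k}\mu^{(n)}(w)$ is false at a fixed level $n$. Take $A$ to be the full tower over a single letter $b$; then the floor-by-floor count gives exactly $\mu(A \cap S^{-(k-1)h^{(n)}}A) = \sum_{w \in \cC A_n^k,\, w_1 = b}\mu^{(n)}(w)$, while $\mu(A) = \mu^{(n)}(b)$, so the inequality requires the per-letter self-return rate $\bigl(\sum_{w_1=w_k=b}\mu^{(n)}(w)\bigr)/\mu^{(n)}(b)$ to be at least the average $\alpha_{n,k}$, which fails whenever the letter $b$ has a below-average self-return rate (e.g.\ two letters with $\mu^{(n)}(a)=2/3$, $\mu^{(n)}(b)=1/3$, $\mu^{(n)}(ab)=\mu^{(n)}(ba)=t>0$, $k=2$ gives return ratio $1-3t$ for the tower of $b$ versus $\alpha_{n,2}=1-2t$). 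So the single-level tower count only yields the constant $\min_{a}\bigl(\sum_{w_1=w_k=a}\mu^{(n)}(w)\bigr)/\mu^{(n)}(a)$, not the sum $\alpha_{n,k}$; upgrading this to the stated formula is precisely the nontrivial content of the cited theorem and requires a genuinely finer argument (fixing $A$ at a low level and analyzing the double limit, which is where the work in \cite{donoso_maass_radic2023partial} lies). On the upper bound side you correctly identify the offset/boundary analysis for $n_j = (k_j-1)h^{(n)} + r_j$ as the main obstacle, but you do not resolve it, so that half is also incomplete. In short: right framework, but both inequalities rest on steps that are either false as stated or explicitly left open.
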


Another useful characterization of the invariant measures is given by explicit formulas between the invariant measures of $X_{\boldsymbol \sigma}^{(n)}$ and $X_{\boldsymbol \sigma}^{(n+1)}$. To do so we combine \cite[Proposition 1.1, Theorem 1.4]{bedaride_hilion_lusting_2023measureSadic} and \cite[Proposition 1.4]{bedaride_hilion_lusting_2022measureMonoid}. In the original statements one needs to normalize the measures to get a probability measure (see \cite[Proposition 1.3]{bedaride_hilion_lusting_2022measureMonoid}), but for constant length morphisms the normalization constant is precisely the length of the morphism. 
Before stating the lemma, for $\sigma \colon A^* \to B^*$, $w \in A^*$ and $u \in B^*$, we define $\lfloor \sigma(w) \rfloor_u$, the \emph{essential occurrence of} $u$ \emph{on} $\sigma(w)$, that is the number of times such that $u$ occurs on $w$ for which the first letter of $u$ occurs in the image of the first letter of $w$ under $\sigma$, and the last letter of $u$ occurs in the image of last letter of $w$ under $\sigma$. 

\begin{example*}
    Let $\sigma \colon \cA^* \to \cA^*$ given by $\sigma(a)=abab$ and $\sigma(b)=babb$. Then $\sigma(ab)=ababbabb$ and $|\sigma(ab)|_{abb} =2 $ but $\lfloor \sigma(ab) \rfloor_{abb}=1$.
\end{example*}

\begin{lemma} \label{lemma directive sequence measure formula} Let $\boldsymbol \sigma = (\sigma_n \colon A_{n+1}^* \to A_n^*)_{n \in \N}$ be a recognizable constant-length and primitive directive sequence and fix an arbitrary $n \in \N$. Then there is a bijection between $\cM (X_{\boldsymbol \sigma}^{(n)},S)$ and $\cM (X_{\boldsymbol \sigma}^{(n+1)},S)$. 

Moreover, for every invariant measure $\mu' \in \cM (X_{\boldsymbol \sigma}^{(n)},S)$, there is an invariant measure $\mu \in \cM (X_{\boldsymbol \sigma}^{(n+1)},S)$ such that for all words $u \in A_n^*$,

    \begin{equation} \label{eq formula1}
        \mu'(u) = \frac{1}{|\sigma_n|} \sum_{w \in W(u)} \lfloor \sigma_n(w) \rfloor_{u} \cdot \mu (w),
    \end{equation}
    where  $ \displaystyle W(u) = \left\{ w \colon |w| \leq \frac{|u|-2}{|\sigma_n|} + 2 \right\}$. Finally, if $\mu$ is ergodic, then $\mu'$ is also ergodic. 
\end{lemma}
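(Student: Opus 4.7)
The plan is to use recognizability of $\boldsymbol \sigma$ to express $X_{\boldsymbol \sigma}^{(n)}$ as a Kakutani--Rokhlin tower of constant height $|\sigma_n|$ over $X_{\boldsymbol \sigma}^{(n+1)}$, and then to derive the formula by counting essential occurrences of $u$ inside the substituted blocks.

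By recognizability (see \cite{Berthe_Steiner_Thuswaldner_Recognizability_morphism:2019}), every $x \in X_{\boldsymbol \sigma}^{(n)}$ decomposes uniquely as $x = S^k \sigma_n(y)$ for some $y \in X_{\boldsymbol \sigma}^{(n+1)}$ and some $0 \leq k < |\sigma_n|$. This yields a Kakutani--Rokhlin partition of $X_{\boldsymbol \sigma}^{(n)}$ into $|\sigma_n|$ levels whose base is measurably conjugate to $X_{\boldsymbol \sigma}^{(n+1)}$ through the map $y \mapsto \sigma_n(y)$. Any $\mu \in \cM(X_{\boldsymbol \sigma}^{(n+1)},S)$ then lifts uniquely to an $S$-invariant $\mu'$ on $X_{\boldsymbol \sigma}^{(n)}$ by assigning mass $\mu/|\sigma_n|$ to each level, and this lift is inverted by restriction to the base followed by the renormalization factor $|\sigma_n|$. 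This produces the claimed bijection between $\cM(X_{\boldsymbol \sigma}^{(n+1)},S)$ and $\cM(X_{\boldsymbol \sigma}^{(n)},S)$.

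For the explicit formula, fix $u \in A_n^*$. An occurrence of $u$ in $x = S^k \sigma_n(y)$ is attributed, via recognizability, to a uniquely determined factor $w$ of $y$ inside whose image $\sigma_n(w)$ the word $u$ occurs essentially. Because such an essential occurrence meets both $\sigma_n(w_1)$ and $\sigma_n(w_{|w|})$ while entirely covering the $|w|-2$ intermediate blocks, one has $|u| \geq (|w|-2)|\sigma_n| + 2$, which is exactly the constraint defining $W(u)$. Summing the indicator function of $[u]$ over the $|\sigma_n|$ levels of the tower and using that each level carries mass $1/|\sigma_n|$ against $\mu$, the $S$-invariance of $\mu'$ yields
\begin{equation*}
    \mu'(u) \;=\; \frac{1}{|\sigma_n|}\sum_{w \in W(u)} \lfloor \sigma_n(w) \rfloor_u \cdot \mu(w),
\end{equation*}
where the integer $\lfloor \sigma_n(w) \rfloor_u$ records the exact number of essential occurrences of $u$ contributed per appearance of $w$ in $y$. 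Preservation of ergodicity is then immediate: as $X_{\boldsymbol \sigma}^{(n)}$ is a Kakutani skyscraper of constant height over $X_{\boldsymbol \sigma}^{(n+1)}$, any $S$-invariant set of positive measure on one side corresponds to an $S$-invariant set of positive measure on the other via the partition, so $\mu$ is ergodic if and only if $\mu'$ is.

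The main technical point is making sure that the essential-occurrence count avoids double-counting across adjacent blocks $\sigma_n(y_i)\sigma_n(y_{i+1})$. This is exactly what the recognizability of $\sigma_n$ and the use of $\lfloor \cdot \rfloor_u$ (rather than $|\cdot|_u$) guarantee: each occurrence of $u$ in $x$ is attributed to one and only one minimal $w$, and the length bound appearing in $W(u)$ ensures that the sum in the formula is finite. The statement is essentially the specialization to constant-length recognizable directive sequences of the general results in \cite{bedaride_hilion_lusting_2023measureSadic, bedaride_hilion_lusting_2022measureMonoid}, where the normalization constant is here just $|\sigma_n|$.
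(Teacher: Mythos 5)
Your proof is correct, but it takes a genuinely different route from the paper: the paper does not prove this lemma at all, instead deducing it by combining \cite[Proposition 1.1, Theorem 1.4]{bedaride_hilion_lusting_2023measureSadic} with \cite[Proposition 1.4]{bedaride_hilion_lusting_2022measureMonoid} and observing only that for constant-length morphisms the normalization constant of \cite[Proposition 1.3]{bedaride_hilion_lusting_2022measureMonoid} equals $|\sigma_n|$. You instead give a self-contained argument via the Kakutani--Rokhlin tower of constant height $|\sigma_n|$ over $X_{\boldsymbol\sigma}^{(n+1)}$ furnished by recognizability, which has the advantage of making transparent both why the normalization is exactly $|\sigma_n|$ (constant return time) and where the length bound in $W(u)$ comes from: an essential occurrence covers at least one letter in the first block, all $(|w|-2)|\sigma_n|$ letters of the intermediate blocks and at least one letter in the last, giving $|u|\geq(|w|-2)|\sigma_n|+2$. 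Your accounting is also right that, once the decomposition $x=S^k\sigma_n(y)$ is fixed, each occurrence of $u$ is essential for exactly one factor $w$ of $y$ (the one whose first and last blocks contain the first and last letters of $u$), so there is no double counting; the only slight imprecision is attributing this uniqueness to recognizability, when recognizability is needed only for the uniqueness and measurability of the decomposition itself, the attribution to $w$ being automatic afterwards. The trade-off is the usual one: the citation route is shorter and covers the non-constant-length case (where, as the paper remarks later, \eqref{eq formula1} fails in this form), while your tower argument is elementary and specific to the constant-length setting actually used here.
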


\begin{corollary}
     Let $\boldsymbol \sigma = (\sigma_n \colon \Lambda_d^* \to \Lambda_d^*)_{n \in \N}  $ be a recognizable constant-length and primitive directive sequence that fulfills \eqref{eqa},\eqref{eqc} and \eqref{eqd} from \cref{lemma BKK}. Letting $\mu_0, \ldots, \mu_{d-1}$ denote the $d$ ergodic measures, then for $n\in \N$ sufficiently large

     \begin{equation} \label{eq formula2}
        \mu^{(n)}_i(u) = \frac{1}{|\sigma_n|} \sum_{w \in W(u)} \lfloor \sigma_n(w) \rfloor_{u} \cdot \mu^{(n+1)}_i (w) \quad \forall u \in \Lambda_d^*.
    \end{equation}
\end{corollary}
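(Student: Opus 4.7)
The plan is to combine \cref{lemma directive sequence measure formula} with the characterization of ergodic measures given in \cref{lemma BKK}. I would first observe that hypotheses \eqref{eqa}, \eqref{eqc}, and \eqref{eqd} pass to any tail of the directive sequence, so \cref{lemma BKK} applies at every sufficiently high level. Consequently, for $n$ large enough, both $X^{(n)}_{\boldsymbol \sigma}$ and $X^{(n+1)}_{\boldsymbol \sigma}$ admit exactly $d$ ergodic measures, labelled so that $\mu^{(n)}_i$ (respectively $\mu^{(n+1)}_i$) is the unique one maximising $\mu'(a_i) + \mu'(b_i)$ among ergodic measures on the corresponding space.

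Next, \cref{lemma directive sequence measure formula} furnishes a bijection $\Phi_n \colon \cM(X^{(n+1)}_{\boldsymbol \sigma},S) \to \cM(X^{(n)}_{\boldsymbol \sigma},S)$, implemented by formula \eqref{eq formula1}, which preserves ergodicity. It therefore restricts to a bijection between the two $d$-element sets of ergodic measures, and proving \eqref{eq formula2} reduces to checking that $\Phi_n(\mu^{(n+1)}_i) = \mu^{(n)}_i$ for each $i$.

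To identify this correspondence, I would set $\nu_i = \Phi_n(\mu^{(n+1)}_i)$ and apply \eqref{eq formula1} to the letters $u = a_i$ and $u = b_i$. Since $|u| = 1$ forces $W(u)$ to consist of single letters only, summing the two resulting identities yields
\begin{equation*}
\nu_i(a_i) + \nu_i(b_i) = \frac{1}{|\sigma_n|} \sum_{c \in \Lambda_d} \bigl(|\sigma_n(c)|_{a_i} + |\sigma_n(c)|_{b_i}\bigr)\, \mu^{(n+1)}_i(c).
\end{equation*}
By \eqref{eqa}, the coefficient in this sum is $o(1)$ for $c \notin \cA_i$ and $1 - o(1)$ for $c \in \cA_i$, while \cref{lemma BKK} ensures $\mu^{(n+1)}_i(a_i) + \mu^{(n+1)}_i(b_i) \to 1$. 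Therefore $\nu_i(a_i) + \nu_i(b_i) \to 1$. Since for $j \neq i$ one has $\mu^{(n)}_j(a_i) + \mu^{(n)}_j(b_i) \to 0$, the only ergodic measure on $X^{(n)}_{\boldsymbol \sigma}$ compatible with this behaviour is $\mu^{(n)}_i$, so $\nu_i = \mu^{(n)}_i$ for $n$ sufficiently large.

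The main obstacle will be keeping the threshold for ``sufficiently large $n$'' uniform across the $d$ indices, but this should be routine: the $d$ ergodic measures are distinguished by a finite family of open conditions on the letter frequencies $\mu'(a_i)+\mu'(b_i)$, so a single threshold works simultaneously for all $i \in \{0, \ldots, d-1\}$, after which \eqref{eq formula2} follows by specialising \eqref{eq formula1} to the matched pair $(\mu^{(n)}_i, \mu^{(n+1)}_i)$.
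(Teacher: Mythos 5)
Your argument is correct and follows essentially the same route as the paper: both proofs combine the letter-frequency characterization of $\mu^{(n)}_i$ from \cref{lemma BKK} with formula \eqref{eq formula1} applied to single letters, where $W(u)$ reduces to letters and the coefficients $\frac{1}{|\sigma_n|}(|\sigma_n(c)|_{a_i}+|\sigma_n(c)|_{b_i})$ are close to $1$ on $\cA_i$ and small elsewhere. The only cosmetic difference is the direction of the identification (you push $\mu^{(n+1)}_i$ forward and recognize its image by its $\cA_i$-mass, while the paper pulls the maximization defining $\mu^{(n)}_i$ back to level $n+1$), which changes nothing of substance.
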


 \begin{proof}
     By the characterization given by \cref{lemma BKK} and using \eqref{eq formula1}
    \begin{align*}
        \mu^{(n)}_i(a_i) &+ \mu^{(n)}_i(b_i) = \max \{ \nu (a_i) + \nu (b_i) \colon \nu \in \cM(X_{\boldsymbol \sigma}^{(n)},S) \}
        \\ 
        &= \frac{1}{|\sigma_n|} \max\left\{ \sum_{c \in \Lambda_d} (| \sigma_n(c) |_{a_i} + | \sigma_n(c) |_{b_i})  \cdot \nu'(c) \mid \nu' \in \cM(X_{\boldsymbol \sigma}^{(n+1)},S)  \right\}.
    \end{align*}
    Using \eqref{eqc}, for big enough $n \in \N$, the invariant measure $\nu'$ that maximizes this equation has to be the invariant measure that maximize $\nu'(a_i)+\nu'(b_i)$ which is in fact $\mu^{(n+1)}_i$. 
     
 \end{proof}

 \begin{remark} \label{rmk letters to letters}
     When $\phi \colon A^* \to B^*$ is a letter to letter morphism, that is $|\phi(c)|=1$ for all $c \in A$, we have that $\phi$ induces a continuous map from $A^{\Z}$ to $B^{\Z}$ and that if $\mu$ is an invariant measure in $B^{\Z}$, then $ \mu' (w) = \displaystyle \sum_{u \in \phi^{-1}(w)} \mu (u)$ corresponds to the pushforward measure $\phi_* \mu$.
 \end{remark}

\section{The gluing technique and lower bound for the partial rigidity rates} \label{section gluing technique}

We recall that $\cA_i = \{a_i, b_i\}$ and $\Lambda_d = \bigcup_{i=0}^{d-1} \cA_i$. Let $\kappa \colon \Lambda^*_d \to \Lambda_d^*$ be the function that for every word of the form $ua_i$ (resp. $ub_i$) with $u\in \Lambda_d^*$, $\kappa(ua_i) = ua_{i+1}$ (resp. $\kappa(ub_i) = ub_{i+1}$) where the index $i \in \{0, \ldots,d-1\}$ is taken modulo $d$. For example, if $d=2$, $\kappa(a_0a_0) = a_0a_1 $, $\kappa(a_0b_0) = a_0b_1 $, $\kappa(a_0a_1) = a_0a_0 $ and $\kappa(a_0b_1) = a_0b_0 $. We highlight that the function $\kappa \colon \Lambda^*_d \to \Lambda_d^*$ is not a morphism.

For a finite collection of substitutions $\{ \tau_i \colon \cA_i^* \to \cA_i^* \mid i =0, \ldots, d-1\}$ we call the morphism $ \sigma = \Gamma( \tau_0, \ldots, \tau_{d-1}) \colon \Lambda_d^* \to \Lambda_d^*$ given by
\begin{align*}
    \sigma(a_i) &= \kappa(\tau_i(a_i)) \\
    \sigma(b_i) &= \kappa(\tau_i(b_i))
\end{align*}
for all $i \in \{0,\ldots,d-1\}$, the \emph{glued substitution} . This family of substitutions is the main ingredient for our construction. 

\begin{example*}
    Let $d=2$, $\tau_0 \colon \cA_0^* \to \cA_0^*$ and $\tau_1 \colon \cA_1^* \to \cA_1^*$ be the substitutions given by

    \begin{equation*}
        \begin{array}{cccc}
             \tau_0(a_0)&= a_0b_0b_0a_0 & \tau_0(b_0)&= b_0a_0a_0b_0,\\
             \tau_1(a_1)&= a_1b_1b_1b_1 & \tau_1(b_1)&= b_1a_1a_1a_1.
        \end{array}
    \end{equation*}
     Then $\sigma = \Gamma (\tau_0, \tau_1) \colon \Lambda_2^* \to \Lambda_2^*$ is given by
    \begin{equation*}
        \begin{array}{cccc}
             \sigma(a_0)&= a_0b_0b_0a_1 & \sigma(b_0)&= b_0a_0a_0b_1,\\
             \sigma(a_1)&= a_1b_1b_1b_0 & \sigma(b_1)&= b_1a_1a_1a_0
        \end{array}
    \end{equation*}
\end{example*}

\begin{lemma} \label{prop glued morphism}
    Let $\tau_i \colon \cA_i^* \to  \cA_i^*$ for $i = 0, \ldots d-1$ be a collection of positive and prolongable substitutions. Let $\boldsymbol \sigma = (\sigma_n \colon \Lambda_d \to \Lambda_d)_{n \in \N}$ be the directive sequence for which $\sigma_n = \Gamma (\tau^{n+1}_0, \ldots, \tau^{n+1}_{d-1})$, that is 
    \begin{align*}
        \sigma_n(a_i) &= \kappa(\tau_i^{n+1}(a_i)) \\
        \sigma_n(b_i) &= \kappa(\tau_i^{n+1}(b_i))
    \end{align*}
    for all $i \in \{0, \ldots, d-1\}$.  Then $\boldsymbol \sigma$ is primitive and left-permutative.

\end{lemma}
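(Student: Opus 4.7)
The plan is to verify left-permutativity essentially by inspection, and then establish primitivity by an explicit computation tracking how letters spread across the alphabet blocks $\cA_0, \ldots, \cA_{d-1}$ under iterated application of $\sigma_n, \sigma_{n+1}, \ldots$.

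For left-permutativity, I would note that since each $\tau_i$ is prolongable on $a_i$ and $b_i$, the word $\tau_i^{n+1}(c)$ starts with $c$ for every $c \in \cA_i$. The operation $\kappa$ modifies only the last letter of a word, so $\sigma_n(c) = \kappa(\tau_i^{n+1}(c))$ also starts with $c$. Hence for any two distinct letters $c, c' \in \Lambda_d$ the images $\sigma_n(c)$ and $\sigma_n(c')$ begin with different letters, verifying that each $\sigma_n$ is left-permutative.

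For primitivity, the first step is to record the following structural fact: for $c \in \cA_i$, $\sigma_n(c)$ lies in $\cA_i^+ \cdot \cA_{i+1}$ (indices mod $d$), where the prefix in $\cA_i^+$ contains both $a_i$ and $b_i$, and the final letter is $a_{i+1}$ if $c = a_i$ or $b_{i+1}$ if $c = b_i$. Both $a_i$ and $b_i$ really appear in the prefix: by positivity of $\tau_i$ one has $|\tau_i^{n+1}(a_i)|_{b_i} \geq 1$, and prolongability forces $\tau_i^{n+1}(a_i)$ to begin and end with $a_i$ on a word of length at least $2$, so $|\tau_i^{n+1}(a_i)|_{a_i} \geq 2$ and removing the final $a_i$ via $\kappa$ still leaves at least one occurrence. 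The case $c = b_i$ is symmetric.

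With this in hand, define $L_k(c) \subseteq \Lambda_d$ as the set of letters appearing in $\sigma_{[n, n+k]}(c)$. Using $\sigma_{[n,n+k]}(c) = \sigma_{[n,n+k-1]}(\sigma_{n+k}(c))$ together with the structural observation, I would prove by induction on $k \geq 0$ that, with indices mod $d$,
\[
L_k(a_i) = \{a_\ell, b_\ell : i \leq \ell \leq i+k\} \cup \{a_{i+k+1}\}, \qquad L_k(b_i) = \{a_\ell, b_\ell : i \leq \ell \leq i+k\} \cup \{b_{i+k+1}\}.
\]
Taking $k = d-1$, the interval $\{i, i+1, \ldots, i+d-1\}$ covers every residue mod $d$, so $L_{d-1}(c) = \Lambda_d$ for every $c \in \Lambda_d$, and therefore $M(\sigma_{[n,n+d-1]})$ has strictly positive entries, proving primitivity. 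The main potential pitfall is not really a deep difficulty but careful bookkeeping in the inductive step: one must check that the lone $\cA_{i+k}$-letter at the tail of $L_{k-1}$ combines with the $\cA_i$- and $\cA_{i+1}$-letters of $\sigma_{n+k}(c)$ to fill in both $a_{i+k}$ and $b_{i+k}$, and confirm that positivity of each $\tau_i$ is precisely the hypothesis that keeps the invariant alive.
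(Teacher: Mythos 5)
Your proof is correct and follows essentially the same route as the paper: left-permutativity is observed exactly as in the paper (prolongability plus the fact that $\kappa$ leaves first letters untouched), and your induction on the letter sets $L_k(c)$ is just an explicit, column-by-column unwinding of the paper's observation that $M(\sigma_n)$ has positive $2\times 2$ diagonal blocks on each $\cA_i$ together with the entries $M(\sigma_n)_{a_{i+1},a_i}=M(\sigma_n)_{b_{i+1},b_i}=1$, so that $M(\sigma_{[n,n+d)})$ is positive. The bookkeeping you flag (that $|\tau_i^{n+1}(a_i)|_{a_i}\geq 2$ survives the removal of the last letter by $\kappa$, and that positivity supplies the $b_i$ occurrence) is precisely what the paper invokes, so no gap remains.
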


\begin{proof}
    Firstly, $\tau_0, \ldots, \tau_{d-1}$ are prolongable, in particular they are left-permutative and $\min\{|\tau_i(a_i)|,|\tau_i(b_i)|\} \geq 2$ for all $i \in \{0,\ldots,d-1\}$. Since the function $\kappa \colon \Lambda^*_d \to \Lambda^*_d$ does not change the first letter and every $\tau_i$ is defined over a different alphabet, the left permutativity is preserved.

    Secondly, $M(\sigma_n)_{c,d} = M(\tau_i^{n+1})_{c,d} - \1_{c=d}$ if $c,d$ are in the same alphabet $\cA_i$, $M(\sigma_n)_{a_{i+1},a_i} = M(\sigma_n)_{b_{i+1},b_i} =1$ and $M(\sigma_n)_{c,d} = 0$ otherwise. Notice that by positivity and prolongability, the sub-blocks $(M(\sigma_n)_{c,d})_{c,d \in \cA_i}$  are positive and therefore, for every $n \in \N$, $M(\sigma_{[n,n+d)})$ only has positive entries. 
    \end{proof}

\begin{theorem} \label{thrm gluing technique}
    Let $\tau_i \colon \cA_i^* \to  \cA_i^*$ for $i = 0, \ldots, d-1$ be a collection of positive and prolongable substitutions. Suppose that every substitution $\tau_i$ has constant length for the same length. Let $\boldsymbol \sigma = (\sigma_n \colon \Lambda_d \to \Lambda_d)_{n \in \N}$ be the directive sequence of glued substitutions $\sigma_n = \Gamma (\tau^{n+1}_0, \ldots, \tau^{n+1}_{d-1})$.
    Then the $\cS$-adic subshift $(X_{\boldsymbol \sigma},S)$ is minimal and has $d$ ergodic measures $\mu_0, \ldots, \mu_{d-1}$ such that for every $i \in \{0,\ldots,d-1\}$
    \begin{align} \label{eq limit}
       \lim_{n \to \infty} \mu^{(n)}_i(w) = \nu_i(w) \quad \text{ for all } w \in \cA_i^*  
    \end{align}
where $\nu_i$ is the unique invariant measure of the substitution subshift given by $\tau_i$. 
\end{theorem}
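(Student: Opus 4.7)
The theorem decomposes into three assertions: minimality of $(X_{\boldsymbol \sigma},S)$, existence of exactly $d$ ergodic measures, and the limit \eqref{eq limit}. Minimality is immediate from \cref{prop glued morphism}, which establishes primitivity of $\boldsymbol \sigma$; the same lemma gives left-permutativity and hence recognizability, so both \cref{lemma BKK} and \cref{lemma directive sequence measure formula} are available.

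To obtain the $d$ ergodic measures I plan to verify the three hypotheses \eqref{eqa}, \eqref{eqc}, \eqref{eqd} of \cref{lemma BKK}. The structural observation driving every verification is that, by the definition of $\Gamma$ and $\kappa$, for every $c\in\cA_i$ the image $\sigma_n(c)$ equals $\tau_i^{n+1}(c)$ with its last letter replaced by the corresponding letter of $\cA_{i+1}$. Writing $\ell$ for the common length of the $\tau_i$, it follows that of the $|\sigma_n|=\ell^{n+1}$ letters of $\sigma_n(c)$, exactly one lies outside $\cA_i$ and it lies in $\cA_{i+1}$. Consequently, \eqref{eqa} and \eqref{eqc} reduce to the estimates $1/|\sigma_n|\to 0$ and $\sum_n 1/\ell^{n+1}<\infty$, while \eqref{eqd} reduces, up to an $O(1)$ correction, to the analogous statement for $\tau_i^{n+1}$, which holds because $|\tau_i^{n+1}(c)|_d/|\tau_i^{n+1}|\to\nu_i(d)$ uniformly in $c\in\cA_i$ by unique ergodicity of $(X_{\tau_i},S)$.

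For the limit \eqref{eq limit}, fix $i\in\{0,\dots,d-1\}$ and $u\in\cA_i^*$ with $|u|\geq 2$ (the case $|u|\leq 1$ is strictly simpler, with no boundary contribution). Choose $n$ large enough that $|u|<|\sigma_n|$, so that $W(u)\subseteq\Lambda_d\cup\Lambda_d^2$, and apply the recursion \eqref{eq formula2}. The two-letter boundary terms contribute at most $(|u|-1)|\Lambda_d|^2/|\sigma_n|$, which vanishes. Among single-letter terms, if $c\notin\cA_i$ then $\sigma_n(c)$ contains at most one letter of $\cA_i$, so $\lfloor\sigma_n(c)\rfloor_u=0$; if $c\in\cA_i$ then $\sigma_n(c)$ differs from $\tau_i^{n+1}(c)$ only in the last position, so $\lfloor\sigma_n(c)\rfloor_u=|\tau_i^{n+1}(c)|_u+O(|u|)$. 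Substituting, dividing by $|\sigma_n|=|\tau_i^{n+1}|$, and applying the uniform convergence $|\tau_i^{n+1}(c)|_u/|\sigma_n|\to\nu_i(u)$ yields
\[
\mu_i^{(n)}(u)=\nu_i(u)\bigl(\mu_i^{(n+1)}(a_i)+\mu_i^{(n+1)}(b_i)\bigr)+o(1).
\]
Since $\mu_i^{(n+1)}(a_i)+\mu_i^{(n+1)}(b_i)\to 1$ by \cref{lemma BKK}, we conclude $\mu_i^{(n)}(u)\to\nu_i(u)$.

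The main obstacle is the error bookkeeping in step 3: one must carefully track how the last-letter replacement performed by $\kappa$ affects both the interior counts $|\sigma_n(c)|_u$ and the two-letter boundary corrections, and then interchange the limits on $|\tau_i^{n+1}(c)|_u/|\sigma_n|$ with the sum over $c\in\cA_i$. All such errors are of order $O(|u|/|\sigma_n|)$, and since $|\sigma_n|=\ell^{n+1}$ grows geometrically while $u$ stays fixed, they vanish uniformly in the limit.
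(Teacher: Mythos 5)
Your proposal is correct. The first two steps (minimality via \cref{prop glued morphism}, and the verification of hypotheses \eqref{eqa}, \eqref{eqc}, \eqref{eqd} of \cref{lemma BKK} using the observation that $\sigma_n(c)$ is $\tau_i^{n+1}(c)$ with only its last letter altered) coincide with the paper's argument. Where you genuinely diverge is in the proof of \eqref{eq limit}. The paper first identifies $\mu_i^{(n)}$ concretely as the empirical frequency measure along the one-sided words $\sigma_{[n,n+k]}(a_i)$ (using the maximality characterization from \cref{lemma BKK}), then runs a telescoping estimate through the levels to obtain the one-sided inequality $\mu_i^{(n)}(w)\geq \freq(w,\tau_i^{n+1}(a_i))$, and finally upgrades this to equality by summing over all words of a fixed length and using that both sides total $1$. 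You instead apply the single-step transfer formula \eqref{eq formula2} once, discard the boundary and off-alphabet terms as $O(|u|/|\sigma_n|)$, and invoke the uniform convergence $|\tau_i^{n+1}(c)|_u/\ell^{n+1}\to\nu_i(u)$ for the primitive substitution $\tau_i$ together with $\mu_i^{(n+1)}(a_i)+\mu_i^{(n+1)}(b_i)\to 1$ from \cref{lemma BKK}. Your route is shorter and yields the limit directly as an equality, at the cost of leaning on two external inputs: the validity of \eqref{eq formula2} for sufficiently large $n$ (which is itself derived from the BKK characterization, so there is no circularity once you have verified those hypotheses) and the classical uniform frequency convergence for primitive substitutions. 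The paper's route is more self-contained, constructing $\mu_i^{(n)}$ explicitly and needing only the convergence of $\freq(w,\tau_i^{n}(a_i))$ along the single letter $a_i$. Both are valid; your error bookkeeping (boundary terms of size $(|u|-1)|\Lambda_d|^2/|\sigma_n|$, last-letter corrections of size $O(|u|)$, vanishing against $|\sigma_n|=\ell^{n+1}$) is sound.
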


\begin{remark*}
    From \eqref{eq limit}, we get that $\displaystyle \lim_{n \to \infty} \mu^{(n)}_i(a_i) + \mu_i^{(n)}(b_i) = 1$ and therefore \\ $\displaystyle \lim_{n \to \infty} \mu^{(n)}_i(w) =0$ for all $w \not \in \cA_i^*$.
\end{remark*}

Before proving the theorem, we want to emphasize that this gluing technique can be easily generalized. Indeed, many of the hypothesis are not necessary but we include them to simplify notation and computations. 
For instance, restricting the analysis to substitutions defined over two letter alphabets is arbitrary. Also, the function $\kappa \colon \Lambda^*_d \to \Lambda_d^*$ could change more than one letter at the end of words. Furthermore, with an appropriated control of the growth, the number of letters replaced could even increase with the levels.

 One fact that seems critical for the conclusion of \cref{thrm gluing technique} is that $\boldsymbol \sigma$ is a constant-length directive sequence and that $\frac{1}{|\sigma_n|}M(\sigma_n)_{c,d}$ for two letters $c$ and $d$ in distinct alphabets $\cA_i$, $\cA_j$ goes to zero when $n$ goes to infinity.

\begin{proof}

By \cref{prop glued morphism}, $(X_{\boldsymbol \sigma},S)$ is minimal. Let $|\tau_i|= \ell$, which is well defined because the substitutions $\tau_0, \ldots, \tau_{d-1}$ all have the same length. Then, for every $n \in \N$, $\sigma_n = \Gamma(\tau_0^{n+1},\ldots, \tau_{d-1}^{n+1})$ has constant length $\ell^{n+1}$.  

We need to prove that $(X_{\boldsymbol \sigma},S)$ has $d$ ergodic measures, and so we check the hypotheses of \cref{lemma BKK}, 
    \begin{align*} 
            &\lim_{n \to \infty}\frac{1}{|\sigma_n|} \sum_{j \neq i } |\sigma_n(a_i)|_{a_j} + |\sigma_n(a_i)|_{b_j} + |\sigma_n(b_i)|_{a_j} + |\sigma_n(b_i)|_{b_j} \\
            &=  \lim_{n \to \infty}\frac{1}{\ell^{n+1}} (|\sigma_n(a_i)|_{a_{i+1}} + |\sigma_n(b_i)|_{b_{i+1}}) = \lim_{n \to \infty}\frac{2}{\ell^{n+1}} = 0. 
        \end{align*}
         This verifies \eqref{eqa}. Similarly for \eqref{eqc},
        \begin{equation*}
            \sum_{n \geq 1} \left( 1- \frac{1}{\ell^{n+1}} (|\sigma_n(a_i)|_{a_i} + |\sigma_n(a_i)|_{b_i})  \right) = \sum_{n \geq 1} \left( 1- \frac{\ell^{n+1}-1}{\ell^{n+1}} \right)  < \infty.
        \end{equation*}
        
        For \eqref{eqd}, notice that $|\sigma_n(a_i)|_{a_i} = |\tau_{i}^{n+1}(a_i)|_{a_i} -1$, therefore $\frac{1}{\ell^{n+1}} |\sigma_n(a_i)|_{a_i} = \freq (a_i, \tau^{n+1}(a_i)) - \frac{1}{\ell^{n+1}}$. Similarly for $|\sigma_n(a_i)|_{b_i}, |\sigma_n(b_i)|_{a_i}$ and $|\sigma_n(b_i)|_{b_i}$. Therefore
        \begin{align*}
            &\lim_{n \to \infty} \frac{1}{\ell^{n+1}} ||\sigma_n(a_i)|_{a_i} - |\sigma_n(b_i)|_{a_i} |    
            \\ =& \lim_{n \to \infty} |\freq(a_i, \tau_i^{n+1}(a_i)) - \freq(a_i, \tau_i^{n+1} (b_i)) |  = \nu_i(a_i) - \nu_i(a_i) =0.
        \end{align*}
        Likewise $\displaystyle \lim_{n \to \infty} \frac{1}{\ell^{n+1}} ||\sigma_n(a_i)|_{b_i} - |\sigma_n(b_i)|_{b_i} | = \nu_i(b_i) - \nu_i(b_i) = 0$. 

        Thus, by \cref{lemma BKK}, there are $d$ ergodic measures, $\mu_0, \ldots, \mu_{d-1}$ which are characterize by 
        \begin{equation} \label{eq measure charact}
            \mu^{(n)}_i(a_i) + \mu^{(n)}_i (b_i) = \max \{ \mu' (a_i)+ \mu'(b_i) \colon \mu' \in \cM(X_{\boldsymbol \sigma}^{(n)},S) \}
        \end{equation} 
        for sufficiently large $n \in \N$. The invariant measure that reaches the maximum in \eqref{eq measure charact} can be characterize as a limit like in \eqref{equation empiric measure}. Indeed, fix $n \in \N$ sufficiently large, $i \in \{0, \ldots, d-1\}$ and define the infinite one-sided word $\displaystyle \boldsymbol w^{(n)} = \lim_{k \to \infty} \sigma_{[n,n+k]} (a_i) = \lim_{k \to \infty} (\sigma_n \circ \cdots \circ \sigma_{n+k}) (a_i)$ and the number $N_k^{(n)}= |\sigma_{[n,n+k]} (a_i)|$ for every $k \in \N$. Let $\mu_n \in \cM(X_{\boldsymbol\sigma},S)$ be the measure given by
\begin{equation*} \label{eq de mu_n}
    \mu_n(u) =  \lim_{k \to \infty} \frac{1}{N^{(n)}_k} \left|\boldsymbol{w}^{(n)}_{[1,N^{(n)}_k]} \right|_u = \lim_{k \to \infty} \freq(u, \sigma_{[n,n+k]}(a_i)) 
\end{equation*}
for all $u \in \Lambda_d^*$. Notice that for any other F\o lner sequence of the form $(\{m_k, m_k+1, \ldots, m'_k\})_{k \in \N}$, $\displaystyle \lim_{k \to \infty} \frac{1}{m'_k-m_k} \left(  \left|\boldsymbol{w}^{(n)}_{[m_k,m'_k)} \right|_{a_i} + \left|\boldsymbol{w}^{(n)}_{[m_k,m'_k)} \right|_{b_i} \right) \leq \mu_n(a_i) + \mu_n(b_i)$. Thus, if $\mu'$ is given by $\displaystyle \mu'(u) = \lim_{k \to \infty} \frac{1}{m'_k-m_k}  \left|\boldsymbol{w}^{(n)}_{[m_k,m'_k)} \right|_{u} $ we get that $\mu'(a_i) + \mu'(b_i) \leq \mu_n(a_i) + \mu_n(b_i)$ and since every invariant measure $\mu' \in \cM(X_{\boldsymbol \sigma}^{(n)},S)$ has this form, $\mu_n = \mu_i^{(n)}$ by \eqref{eq measure charact}.

 To prove \eqref{eq limit}, fix $w \in \cA_i^*$ and $n \in \N$ large enough, then
        \begin{align}
            \mu_i^{(n)}(w) &=  \lim_{k \to \infty} \frac{|\sigma_{[n,n+k]}(a_i)|_w}{|\sigma_{[n,n+k]}(a_i)|} = \lim_{k \to \infty} \frac{|\sigma_{[n,n+k)} \circ \kappa (\tau_i^{n+k+1}(a_i))|_w}{|\sigma_{[n,n+k]}(a_i)|} \notag \\
            &\geq \lim_{k \to \infty} \frac{1}{|\sigma_{[n,n+k]}(a_i)|} \left( |\sigma_{[n,n+k)}(\tau_i^{n+k+1}(a_i))|_w - 1 + |\sigma_{[n,n+k)} (a_{i+1})|_w \right) \notag \\
            &\geq \lim_{k \to \infty} \frac{|\sigma_{[n,n+k)}(\tau_i^{n+k+1}(a_i))|_w }{|\sigma_{[n,n+k]}(a_i)|}, \label{ineq freq}
        \end{align}
where in the last inequality we use that $|\sigma_{[n,n+k]}| = \ell^{n} \cdot \ell^{n+1}\cdots  \ell^{n+k+1}$ and therefore $\frac{|\sigma_{[n,n+k)}|}{|\sigma_{[n,n+k]}|} = \frac{1}{\ell^{n+k+1}} \xrightarrow{k \to \infty} 0$. Notice that 
    \begin{align*}
        |\sigma_{[n,n+k)}(\tau_i^{n+k+1}(a_i))|_w &\geq |\sigma_{[n,n+k)}(a_i)|_w |\tau_i^{n+k+1}(a_i)|_{a_i} \\&+ |\sigma_{[n,n+k)}(b_i)|_w |\tau_i^{n+k+1}(a_i)|_{b_i}
    \end{align*}
    and since $|\tau_i^{n+k+1}(a_i)|_{a_i} + |\tau_i^{n+k+1}(a_i)|_{b_i} = \ell^{n+k+1}$ there exists $\lambda \in (0,1)$ such that

   \begin{equation*} 
        |\sigma_{[n,n+k)}(\tau_i^{n+k+1}(a_i))|_w \geq \ell^{n+k+1} \left( \lambda |\sigma_{[n,n+k)}(a_i)|_w + (1-\lambda) |\sigma_{[n,n+k)}(b_i)|_w \right).
    \end{equation*}
    
    Combining the previous inequality with \eqref{ineq freq} and supposing, without lost of generality, that $\displaystyle|\sigma_{[n,n+k)}(a_i)|_w = \min \{ |\sigma_{[n,n+k)}(a_i)|_w, |\sigma_{[n,n+k)}(b_i)|_w\}$, we get that 
    $$
            \mu_i^{(n)} (w) \geq \lim_{k \to \infty} \frac{ \ell^{n+k+1}}{|\sigma_{[n,n+k]}(a_i)|}   |\sigma_{[n,n+k)}(a_i)|_w. $$ 
            Now inductively
        \begin{align*}
           \mu_i^{(n)}(w) &\geq \lim_{k \to \infty} \frac{\ell^{n+2}   \ell^{n+3} \cdots  \ell^{n+k+1}} {|\sigma_{[n,n+k]}(a_i)|} |\tau_i^{n+1}(a_i)|_w   = \frac{ |\tau_i^{n+1}(a_i)|_w }{\ell^{n+1}},
        \end{align*}
        where in the last equality we use again that $|\sigma_{[n,n+k]}| = \ell^{n} \cdot \ell^{n+1}\cdots  \ell^{n+k+1}$. We conclude that $ \displaystyle \mu_i^{(n)}(w) \geq  \freq (w, \tau_i^{n+1}(a_i) )$, and then taking $n \to \infty$,
\begin{equation} \label{ineq final}
     \lim_{n \to \infty} \mu_i^{(n)}(w) \geq \lim_{n \to \infty}   \freq (w, \tau_i^n(a_i)) = \nu_i(w). 
\end{equation}

Since $w \in \cA_i^*$ was arbitrary \eqref{ineq final} holds for every word with letters in $\cA_i$. In particular, for every $k \geq 1$, $\displaystyle 1 = \sum_{u \in \cA_i^k} \nu_i(u) \leq \lim_{n \to\infty} \sum_{u \in \cA_i^k}  \mu_i^{(n)}(u) \leq 1$ which implies that the inequality in \eqref{ineq final} is an equality for every word $w \in \cA_i^*$. 
\end{proof}

In what follows every system $(X_{\boldsymbol \sigma}, S)$ and family of substitutions $\tau_i \colon \cA^*_i \to \cA^*_i$ for $i = 0, \ldots,d-1$ satisfy the assumption of \cref{thrm gluing technique}.

\begin{corollary}  $(X_{\boldsymbol \sigma},S)$ has non-superlinear complexity.
\end{corollary}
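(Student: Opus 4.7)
The plan is to exhibit an explicit sequence of scales $k_N \to \infty$ along which the ratio $p_{X_{\boldsymbol \sigma}}(k_N)/k_N$ stays bounded. The natural choice is the common length of the level-$N$ images, namely $k_N = |\sigma_{[0,N)}| = \prod_{n=0}^{N-1} \ell^{n+1}$, which is finite because $\boldsymbol \sigma$ is of constant length at every level and the alphabet $\Lambda_d$ is fixed (and hence has constant size $2d$).

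The main tool is recognizability. By \cref{prop glued morphism} the directive sequence $\boldsymbol \sigma$ is left-permutative, and as recalled at the start of \cref{section gluing technique} this already implies that $\boldsymbol \sigma$ is recognizable. Consequently, every $x \in X_{\boldsymbol \sigma}$ admits a unique factorization $x = S^j(\sigma_{[0,N)}(y))$ with $y \in X^{(N)}_{\boldsymbol \sigma}$ and $0 \leq j < |\sigma_{[0,N)}| = k_N$. I would then take an arbitrary $w \in \cL_{k_N}(X_{\boldsymbol \sigma})$, realize it as $w = x_{[0,k_N)}$ for some $x \in X_{\boldsymbol \sigma}$, and apply the desubstitution. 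Since $j < k_N$ and $|w| = k_N$, the window $[j, j+k_N)$ is contained in the concatenation of the images of the first two letters of $y$, so $w = \sigma_{[0,N)}(y_0 y_1)[j:j+k_N]$ with $y_0 y_1 \in \cL_2^{(N)}(\boldsymbol \sigma) \subseteq \Lambda_d^2$.

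A direct count then gives
\begin{equation*}
p_{X_{\boldsymbol \sigma}}(k_N) \;\leq\; |\Lambda_d|^2 \cdot k_N \;=\; 4 d^2 \cdot k_N,
\end{equation*}
since there are at most $4d^2$ pairs $(y_0,y_1)$ and at most $k_N$ positions $j$. As $k_N \to \infty$ with $N$, this yields
\begin{equation*}
\liminf_{n \to \infty} \frac{p_{X_{\boldsymbol \sigma}}(n)}{n} \;\leq\; 4d^2 \;<\; \infty,
\end{equation*}
which is precisely the definition of non-superlinear complexity.

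There is no serious obstacle: the whole argument reduces to correctly invoking the recognizable desubstitution at level $N$ and exploiting the fact that $k_N$ equals the scale of that level exactly, so two preimage letters always suffice to cover a window of length $k_N$. The only care needed is to note that left-permutativity (already established in \cref{prop glued morphism}) is enough for recognizability, and that the alphabet remains $\Lambda_d$ at every level so that the combinatorial factor $|\Lambda_d|^2$ is independent of $N$.
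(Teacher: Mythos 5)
Your argument is correct, but it takes a genuinely different route from the paper. The paper disposes of this corollary in one line by citing a general result (Corollary 6.7 of Donoso--Durand--Maass--Petite), which states that $\cS$-adic subshifts with finite alphabet rank and a constant-length primitive directive sequence have non-superlinear complexity; your proof is essentially a self-contained, elementary version of that result specialized to the present construction. Your counting is sound: at the scale $k_N = h^{(N)}$ every word of length $k_N$ sits inside $\sigma_{[0,N)}(y_0y_1)$ for some two-letter word $y_0y_1$ of the level-$N$ language at one of $k_N$ offsets, giving $p_{X_{\boldsymbol\sigma}}(k_N) \leq 4d^2 k_N$ and hence a finite $\liminf$ of $p_{X_{\boldsymbol\sigma}}(n)/n$, which is exactly the paper's definition of non-superlinear complexity. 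One small remark: you do not actually need recognizability (nor the uniqueness of the desubstitution) for this upper bound. Since $\cL(X_{\boldsymbol\sigma}) = \cL^{(0)}(\boldsymbol\sigma)$ by primitivity, every word of the language is a factor of $\sigma_{[0,N)}(v)$ for some $v \in \cL^{(N)}(\boldsymbol\sigma)$, and a word of length exactly $k_N$ can overlap at most two consecutive letter-images because each image has length exactly $k_N$; recognizability would only be needed if you wanted the triple $(y_0,y_1,j)$ to be unique, which an upper bound does not require. What your approach buys is transparency and an explicit constant $4d^2$ depending only on the alphabet; what the citation buys is brevity and the observation that the phenomenon is completely general for finite alphabet rank, constant-length directive sequences.
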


\begin{proof}
    This is direct from \cite[Corollary 6.7]{Donoso_Durand_Maass_Petite_interplay_finite_rank_Sadic:2021} where $\cS$-adic subshifts with finite alphabet rank and constant-length primitive  directive sequences have non-superlinear complexity.
\end{proof}

\begin{corollary} \label{cor delta smaler}
    If $\mu_0, \ldots, \mu_{d-1}$ are the ergodic measures of $(X_{\boldsymbol \sigma},S)$, then  
    \begin{equation}  \label{eq lower bound delta}
        \delta_{\nu_i} \leq \delta_{\mu_i}
    \end{equation}
for all $i \in \{0,\ldots,d-1\}$, where each $\nu_i$ is the unique invariant measure of $X_{\tau_i}$.

\end{corollary}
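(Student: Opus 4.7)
The plan is to apply \cref{theorem constant length delta mu} to both systems and compare the resulting formulas, using the cylinder-wise convergence established in \cref{thrm gluing technique}.

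First, applied to $(X_{\boldsymbol \sigma}, S, \mu_i)$, \cref{theorem constant length delta mu} gives
\[ \delta_{\mu_i} = \lim_{n \to \infty} \sup_{k \geq 2} \sum_{w \in \cC \Lambda_d^k} \mu_i^{(n)}(w). \]
Fix any $k \geq 2$. Because $\cC\cA_i^k \subseteq \cC\Lambda_d^k$, for every $n$ one has $\sup_{k'} \sum_{w \in \cC \Lambda_d^{k'}} \mu_i^{(n)}(w) \geq \sum_{w \in \cC \cA_i^k} \mu_i^{(n)}(w)$. The right-hand side is a finite sum in which each term converges to $\nu_i(w)$ by \cref{thrm gluing technique}, so $\liminf_{n\to\infty} \sum_{w \in \cC \cA_i^k} \mu_i^{(n)}(w) = \sum_{w \in \cC \cA_i^k} \nu_i(w)$. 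Taking the $\sup$ over $k$ on both sides yields
\[ \delta_{\mu_i} \;\geq\; \sup_{k \geq 2} \sum_{w \in \cC \cA_i^k} \nu_i(w). \]

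Next, I apply \cref{theorem constant length delta mu} to the substitution subshift $(X_{\tau_i}, S, \nu_i)$, viewed as the constant directive sequence $(\tau_i, \tau_i, \ldots)$, obtaining
\[ \delta_{\nu_i} = \lim_{m \to \infty} \sup_{k \geq 2} \sum_{w \in \cC \cA_i^k} \nu_i^{(m)}(w). \]
The crucial observation is that, for the constant directive sequence $(\tau_i)$, $\cL^{(m)}(\tau_i) = \cL(X_{\tau_i})$ for every $m$, so $X_{\tau_i}^{(m)} = X_{\tau_i}$ as subshifts. This reflects the self-similarity $u = \tau_i^m(u)$ of the fixed point of $\tau_i$, available because $\tau_i$ is primitive and prolongable. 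By unique ergodicity of $X_{\tau_i}$, the induced measure $\nu_i^{(m)}$ must equal $\nu_i$ as a probability measure, so $\nu_i^{(m)}(w) = \nu_i(w)$ for every cylinder. The limit in $m$ collapses, giving $\delta_{\nu_i} = \sup_{k \geq 2} \sum_{w \in \cC \cA_i^k} \nu_i(w)$, which combined with the previous display produces $\delta_{\mu_i} \geq \delta_{\nu_i}$.

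The main subtlety I expect is the identification $\nu_i^{(m)} = \nu_i$ on cylinders: it requires both the structural equality $X_{\tau_i}^{(m)} = X_{\tau_i}$ and unique ergodicity, and one must be careful to distinguish the induced measure on the $m$-th Kakutani–Rokhlin base from the measure on the original system before invoking the two. Once this is in place, the proof consists only of a routine interchange of $\liminf_n$ with $\sup_k$ and the pointwise convergence provided by \cref{thrm gluing technique}.
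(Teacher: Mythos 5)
Your proof is correct and takes essentially the same route as the paper's: both combine \cref{theorem constant length delta mu} (applied to $X_{\boldsymbol\sigma}$ and to the substitution subshift, where $\nu_i^{(m)}=\nu_i$ by unique ergodicity) with the cylinder-wise convergence $\mu_i^{(n)}(w)\to\nu_i(w)$ from \cref{thrm gluing technique}. The only cosmetic difference is that the paper invokes \eqref{eq constant length p rig rates} to extract a sequence $(k_t)$ realizing $\delta_{\nu_i}$ and runs a $1/t$-approximation, whereas you fix $k$, pass to the limit in $n$ first, and take the supremum over $k$ afterwards.
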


\begin{proof}
    By \cref{theorem constant length delta mu} equation \eqref{eq constant length p rig rates}, there exists a sequence of $(k_t)_{t \in \N}$ such that
\begin{equation*}
    \delta_{\nu_i} = \lim_{t \to \infty} \sum_{w \in \cC \cA_i^{k_t}}  \nu_i (w) 
\end{equation*}
and by \eqref{eq limit} for every $t \in \N$, there exists $n_t$ such that
\begin{equation*}
    \sum_{w \in \cC \cA_i^{k_t}}  \mu_i^{(n)} (w)  \geq  \sum_{w \in \cC \cA_i^{k_t}}   \nu_i (w)  - \frac{1}{t} \quad \text{ for all } n \geq n_t.
\end{equation*}
Taking limits we have,
\begin{equation*} 
    \delta_{\mu_i} \geq \lim_{t \to \infty} \left( \sum_{w \in \cC \cA_i^{k_t}}  \nu_i (w)  - \frac{1}{t} \right) = \delta_{\nu_i}. \qedhere
\end{equation*} 
\end{proof}
We finish this section with a case where the lower bound in \eqref{eq lower bound delta} is trivially achieved. For that, when we define a substitution $\tau \colon \cA^* \to \cA^*$ we abuse notation and write $\tau \colon \cA_i^* \to \cA_i^*$, by replacing the letters $a$ and $b$ by $a_i$ and $b_i$ respectively. Using that abuse of notation for $i \neq j$, we say that $\tau \colon \cA_i^* \to \cA_i^*$ and $\tau \colon \cA_j^* \to \cA_j^*$  are the \emph{same substitution} even though they are defined over different alphabets. We write $\Gamma(\tau,d) \colon \Lambda_d^* \to \Lambda_d^*$ when we are gluing $d$ times the same substitution. In the next corollary we prove that if we glue the same substitutions then we achieve the bound.

\begin{corollary} \label{cor one substitution}
        Let $\tau \colon \cA^* \to  \cA^*$  be a positive, prolongable and constant length substitution. Let $\boldsymbol \sigma = (\sigma_n \colon \Lambda_d \to \Lambda_d)_{n \in \N}$ be the directive sequence of glued substitutions $\sigma_n = \Gamma (\tau^{n+1},d)$.
    Then $(X_{\boldsymbol \sigma},S)$ has $d$ ergodic measures with the same partial rigidity rate $\delta_{\nu}$, where $\nu$ denotes the unique invariant measure of the substitution subshift $(X_{\tau},S)$.
\end{corollary}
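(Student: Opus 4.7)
The plan is to exhibit a topological factor map from $(X_{\boldsymbol \sigma}, S)$ onto $(X_\tau, S)$ that pushes every ergodic measure $\mu_i$ to $\nu$, and then invoke the monotonicity of the partial rigidity rate under factor maps (King's result cited in the introduction) to obtain the missing inequality $\delta_{\mu_i} \leq \delta_\nu$. Combined with \cref{cor delta smaler}, which already yields $\delta_\nu = \delta_{\nu_i} \leq \delta_{\mu_i}$, this will force equality for every $i \in \{0, \ldots, d-1\}$.

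Concretely, I would define the letter-to-letter morphism $\phi \colon \Lambda_d^* \to \cA^*$ by $\phi(a_i) = a$ and $\phi(b_i) = b$ for every $i$, and verify the intertwining identity $\phi \circ \sigma_n = \tau^{n+1} \circ \phi$ on each letter of $\Lambda_d$. The only delicate point is the presence of $\kappa$ in $\sigma_n(c) = \kappa(\tau^{n+1}(c))$: since $\kappa$ only changes the subscript of the last letter while preserving whether it is an $a$ or a $b$, one has $\phi \circ \kappa = \phi$ as maps on words, and the identity then follows from the fact that, under $\phi$, all $\tau_i$ are literally the same substitution $\tau$. Iterating gives $\phi \circ \sigma_{[0,N)} = \tau^{1+2+\cdots+N} \circ \phi$ for every $N$, hence $\phi(\cL^{(0)}(\boldsymbol \sigma)) \subseteq \cL(\tau)$.

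By \cref{rmk letters to letters}, $\phi$ induces a continuous shift-equivariant map $\tilde\phi \colon \Lambda_d^\Z \to \cA^\Z$, and the language inclusion together with minimality of both systems shows that $\tilde\phi$ restricts to a surjection $X_{\boldsymbol \sigma} \to X_\tau$. Since $(X_\tau, S)$ is uniquely ergodic, the pushforward $\tilde\phi_* \mu_i$ must equal $\nu$ for every $i$, and the factor-map monotonicity of the partial rigidity rate immediately yields $\delta_\nu \geq \delta_{\mu_i}$, closing the argument. No step is a serious obstacle; the substantive insight is simply that ``forgetting the alphabet label $i$'' is a well-defined factor onto the substitution subshift, which transports the partial rigidity question upstairs to one whose answer is already known.
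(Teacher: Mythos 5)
Your proposal is correct and is essentially the paper's own argument: the paper likewise uses the letter-to-letter map $a_i \mapsto a$, $b_i \mapsto b$ to induce a factor map onto $X_\tau$, applies King's monotonicity to get $\delta_{\mu_i} \leq \delta_\nu$, and closes with \cref{cor delta smaler} for the reverse inequality. The only difference is that you spell out the intertwining identity $\phi \circ \sigma_n = \tau^{n+1} \circ \phi$ and the surjectivity/unique-ergodicity details that the paper leaves implicit.
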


\begin{proof}
    The letter-to-letter morphism $\phi \colon \Lambda_d^* \to \cA^*$ given by $a_i \mapsto a$ and $b_i \mapsto b$ for all $i=0,\ldots,d-1$ induce a factor map from $X_{\boldsymbol \sigma}$ to $X_{\tau}$ and therefore $\delta_{\mu} \leq \delta_{\nu}$ for all $\mu \in \cE(X_{\boldsymbol \sigma}, S)$ (see \cite[Proposition 1.13]{King_joining-rank_finite_mixing:1988}). The opposite inequality is given by \cref{cor delta smaler}.
\end{proof}

\section{Computation of the partial rigidity rates} \label{section computation partial rigidity}

\subsection{Decomposition of the directive sequence}

We maintain the notation, using $\cA_i = \{a_i,b_i \} $ and  $\Lambda_d = \bigcup_{i=0}^{d-1} \cA_i$ and we also fix $\cA_i' = \{a_i', b_i'\}$, $\Lambda_d' = \bigcup_{i=0}^{d-1} \cA_i \cup \cA_i'$. In this section, $\tau_i \colon \cA^*_i \to \cA_i^*$ for $i = 0, \ldots, d-1$ is a collection of mirror substitutions satisfying the hypothesis of \cref{thrm gluing technique}, $\ell = |\tau_i|$ and $\boldsymbol \sigma = ( \Gamma(\tau_0^{n+1}, \ldots, \tau_{d-1}^{n+1}))_{n \in \N}$, that is
 \begin{align*}
        \sigma_n(a_i) &= \kappa(\tau_i^{n+1}(a_i)) \\
        \sigma_n(b_i) &= \kappa(\tau_i^{n+1}(b_i))
    \end{align*}
for all $i \in \{0, \ldots,d-1\}$. We also write $\cE$ instead of $\cE(X_{\boldsymbol \sigma}, S)= \{\mu_0, \ldots, \mu_{d-1}\}$ for the set of ergodic measures. 

\begin{proposition}
    The directive sequence $\boldsymbol \sigma$ can be decomposed using $3$ morphisms in the following way: for every $n \in \N$, $\sigma_n = \phi \circ \rho^{n} \circ \psi$ where
\begin{align*}
    \psi \colon \Lambda_d^* \to (\Lambda_d')^* & \quad a_i \mapsto u_i a_{i+1}' \\
    & \quad b_i \mapsto v_i b_{i+1}'\\
    \\
    \rho \colon (\Lambda_d')^* \to (\Lambda_d')^* & \quad a_i \mapsto \tau_i(a_i) \quad  a_i' \mapsto  u_{i-1} a_i' \\
    & \quad b_i \mapsto \tau_i (b_i) \quad b_i' \mapsto  v_{i-1} b_i' \\
    \\
    \phi \colon (\Lambda_d')^* \to \Lambda_d^* & \quad a_i \mapsto a_i \quad  a_i' \mapsto a_{i} \\
    & \quad b_i \mapsto b_i \quad b_i' \mapsto b_{i}.
\end{align*}
with $u_i = \tau_i(a_i)_{[1,\ell)}$ and $v_i = \tau_i(b_i)_{[1,\ell)}$ and the index $i$ is taken modulo $d$. 
\end{proposition}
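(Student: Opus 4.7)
The plan is to verify the equality of morphisms on each generator $a_i$ (the $b_i$ case being symmetric, using $v_i$ in place of $u_i$). Since $\sigma_n$, $\phi \circ \rho^n \circ \psi$ are morphisms, it suffices to check they agree on letters.

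First I would record two structural observations. On the sub-alphabet $\cA_i \subset \Lambda_d'$, the morphism $\rho$ coincides with $\tau_i$, so $\rho^n$ restricted to $\cA_i^*$ equals $\tau_i^n$. This handles the ``unprimed'' parts of any $\rho$-iterate. Second, since $\tau_i$ is prolongable on $a_i$, the word $\tau_i(a_i)$ ends with $a_i$, so by the definition $u_i = \tau_i(a_i)_{[1,\ell)}$ we get the factorization $\tau_i(a_i) = u_i a_i$. A one-line induction then yields
\begin{equation*}
\tau_i^{n+1}(a_i) = \tau_i^n(u_i)\,\tau_i^{n-1}(u_i)\,\cdots\,\tau_i(u_i)\,u_i\, a_i,
\end{equation*}
and applying $\kappa$ replaces the final $a_i$ by $a_{i+1}$, giving the desired expression for $\sigma_n(a_i)$.

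Next I would compute $\rho^n(\psi(a_i)) = \rho^n(u_i)\,\rho^n(a_{i+1}')$. The first factor is $\tau_i^n(u_i)$ by the first observation (since $u_i \in \cA_i^*$). For the second factor, using $\rho(a_{i+1}') = u_i a_{i+1}'$ and again the fact that $\rho$ acts as $\tau_i$ on the letters of $u_i$, an induction on $n$ gives
\begin{equation*}
\rho^n(a_{i+1}') = \tau_i^{n-1}(u_i)\,\tau_i^{n-2}(u_i)\,\cdots\,\tau_i(u_i)\,u_i\, a_{i+1}'
\end{equation*}
for $n \geq 1$ (with the product empty when $n=0$). Combining these two computations,
\begin{equation*}
\rho^n(\psi(a_i)) = \tau_i^n(u_i)\,\tau_i^{n-1}(u_i)\,\cdots\,u_i\, a_{i+1}'.
\end{equation*}
Applying the letter-to-letter morphism $\phi$ drops the prime on the final letter and leaves the rest unchanged, yielding exactly $\kappa(\tau_i^{n+1}(a_i))$ by the formula of the first paragraph.

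The $b_i$ case is identical after replacing $u_i$ by $v_i = \tau_i(b_i)_{[1,\ell)}$ and using that $\tau_i$ is prolongable on $b_i$ so that $\tau_i(b_i) = v_i b_i$. I do not anticipate any real obstacle; the only thing to be careful about is the index bookkeeping in $\rho(a_i') = u_{i-1}a_i'$ versus $\psi(a_i) = u_i a_{i+1}'$ (the ``shift'' in $\kappa$ is absorbed into the prime letters and then undone by $\phi$), which is why the induction cleanly produces $u_i$ (and not $u_{i-1}$) inside $\rho^n(a_{i+1}')$.
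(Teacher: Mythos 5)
Your proposal is correct and follows essentially the same route as the paper: both rest on the telescoping induction $\rho^n(a_{i+1}') = \tau_i^{n-1}(u_i)\cdots\tau_i(u_i)u_i\,a_{i+1}'$, the prolongability fact $\tau_i(a_i)=u_ia_i$ to identify the telescoped product with $\kappa(\tau_i^{n+1}(a_i))$ minus its last letter, and $\phi$ erasing the prime. The only cosmetic difference is that the paper collapses $\rho^n\circ\psi(a_i)$ to $\rho^{n+1}(a_{i+1}')$ via $\psi(a_i)=\rho(a_{i+1}')$ rather than splitting off $\rho^n(u_i)$ as you do.
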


\begin{proof}
    Fix $i \in \{0,\ldots,d-1\}$. 
    Consider first that for every $n \geq 1$, $\rho^n(a_{i+1}') = \rho^{n-1}(u_i)\rho^{n-1}(a_{i+1}')= \tau_i^{n-1}(u_i)\rho^{n-1}(a_{i+1}')$, therefore by induction $$\rho^n(a_{i+1}') = \tau_i^{n-1}(u_i)\tau_i^{n-2}(u_{i}) \cdots \tau_i(u_i)u_ia_{i+1}' .$$
    
    Since, by assumption, the last letter of $\tau_i(a_i)$ is $a_i$, one gets that $\tau_i^{n-1}(u_i)\tau_i^{n-2}(u_{i}) $ $ \cdots \tau_i(u_i)u_i = \tau^{n}(a_i)_{[1,\ell^n)}$ and then $\rho^n(a_{i+1}') =  \tau^{n}(a_i)_{[1,\ell^n)} a_{i+1}'$. Also, we notice that $\psi(a_i) = \rho(a_{i+1}')$ and therefore $\rho^n \circ \psi(a_i) = \rho^{n+1}(a_{i+1}') = \tau^{n+1}(a_i)_{[1,\ell^{n+1})} a_{i+1}'  $. 
    
    Finally, $\displaystyle \phi \circ \rho^n \circ \psi(a_i) = \phi( \tau^{n+1}(a_i)_{[1,\ell^{n+1})}) \phi(a_{i+1}') = \tau^{n+1}(a_i)_{[1,\ell^{n+1})} a_{i+1} = \kappa(\tau^{n+1}(a_i))= \sigma_n(a_i)  .$ 
    We conclude noticing that the same proof works for $b_i$. 
\end{proof}

With this decomposition, we make an abuse of notation and define a directive sequence $\boldsymbol \sigma '$ over an index $Q$ different from $\N$. 

Set $\displaystyle Q = \{0\} \cup \bigcup_{n \geq 1} \left\{ n + \frac{m}{n+2}: m = 0, \ldots, n+1 \right\} $ we define the directive sequence $\boldsymbol \sigma' $ indexed by $Q$ given by 

\begin{equation*}
\sigma'_q =
    \begin{cases}
        \begin{array}{cc}
            \phi & \text{ if } q=n  \\
            \rho & \text{ if } q=n + m/(n+2) \text{ for } m=1, \ldots, n \\
            \psi & \text{ if } q=n + (n+1)/(n+2) 
        \end{array}
    \end{cases}
\end{equation*}
for all $n \geq 1$. We use this abuse of notation, in order to get $X^{(n)}_{\boldsymbol \sigma} = X^{(n)}_{\boldsymbol \sigma'}$ for every positive integer $n$, and therefore we maintain the notation for $\mu^{(n)}_i$. The advantage of decomposing the directive sequence is that every morphism in $\boldsymbol \sigma$ has constant length, either $\ell$ in the case of $\psi$ and $\rho$ or $1$ in the case of $\phi$. This simplifies the study of the complete words at each level.  Notice that, the morphisms $\phi$, $\rho$ and $\psi$ are not positive, otherwise the $\cS$-adic subshift would automatically be uniquely ergodic, see \cite{Durand2000}, which does not happen as we show in \cref{thrm gluing technique}.

\subsection{Recurrence formulas for complete words} The formulas in this section are analogous to those presented in \cite[Lemma 7.7]{donoso_maass_radic2023partial}, and aside from technicalities, the proofs are not so different.

We define four sets of words that are useful in what follows,
\begin{align}
C_k^i&= \{ w \in \Lambda_d^k \colon w_1,w_k \in \cA_i \cup \cA_{i+1}', w_1 = w_k\} \label{equation C}\\
    D_k^i&= \{ w \in (\Lambda_d')^k \colon w_1,w_k \in \cA_i \cup \cA_{i+1}', \eta(w_1) = \eta(w_k)\} \label{equation D}\\
    \overline{C}_k^i&= \{ w \in \Lambda_d^k \colon w_1,w_k \in \cA_i \cup \cA_{i+1}', w_1 = \overline{w_k} \} \\
    \overline{D}_k^i&= \{ w \in (\Lambda_d')^k \colon w_1,w_k \in \cA_i \cup \cA_{i+1}', \eta(w_1) = \overline{\eta(w_k)}\} \label{equation D bar}
\end{align}
where $\eta \colon \Lambda_{d}' \to \Lambda_{d}$ is a letter-to-letter function for which $a_i \mapsto a_i$, $b_i \mapsto b_i$, $a_{i+1}' \mapsto a_{i}$ and $b_{i+1}' \mapsto b_i$. For instance if $w \in D_k^i$ and $w_1 = a_i$ then $w_k \in \{a_i, a_{i+1}'\}$. 

To simplify the notation, we enumerate the index set $Q = \{q_m \colon m \in \N\}$ where $q_{m} < q_{m+1}$ for all $m \in \N$. We continue using the abuse of notation $\mu(w) = \mu([w])$ and for a set of words $W$, $\displaystyle \mu(W) = \mu \left(\bigcup_{w \in W} [w]\right)$. 

For $i \in \{0, \ldots, d-1\}$, fix the word $v= \tau_i(a_i)$ and we define $\delta_{j,j'}^{i} = \1_{v_j = v_{j'}}$ for $j, j' = \{1,\ldots, \ell\}$ where $\ell = |v|$. Notice that if one defines $\delta_{j,j'}^{i}$ with the word $\tau_i(b_i)$ instead of $\tau_i(a_i)$, by the mirror property, the value remains the same. Now, for $j \in \{ 1, \ldots, \ell\}$, we define
\begin{equation*}
    r_j^{i} = \sum^{j}_{j'=1} \delta_{\ell-j + j', j'}^i \quad \text{ and } \quad
    \Tilde{r}_j^{i} = \sum^{\ell-j}_{j'=1} \delta_{j', j+j'}^i.
\end{equation*}

\begin{lemma} \label{lemma complete rho}
    If $\boldsymbol \sigma' = (\sigma'_q)_{q \in Q}$ and $\mu \in \cE$, then  for every $n \in \N$, and every $q_m = n + \frac{m'}{n+2}$ for $m' \in \{1, \ldots, n\}$,
    \begin{align*}
       \ell \cdot \mu^{(q_m)} (D^i_{\ell k + j }) = &  r^i_j \cdot \mu^{(q_{m+1})} (D^i_{k+2}) + \Tilde{r}^i_j \cdot \mu^{(q_{m+1})} (D^i_{k+1}) \\
       &+ (j -r^i_j) \mu^{(q_{m+1})} (\overline{D}^i_{k+2}) + (\ell-j-\Tilde{r}^i_j) \mu^{(q_{m+1})} (\overline{D}^i_{k+1}) \\
        \\
        \ell \cdot \mu^{(q_m)} (\overline{D}^i_{\ell k + j }) = &  (j - r^i_j) \mu^{(q_{m+1})} (D^i_{k+2}) + (\ell-j- \Tilde{r}^i_j) \mu^{(q_{m+1})} (D^i_{k+1}) \\
       &+ r^i_j \cdot \mu^{(q_{m+1})} (\overline{D}^i_{k+2}) + \Tilde{r}^i_j \cdot  \mu^{(q_{m+1})} (\overline{D}^i_{k+1}) 
    \end{align*}
for $j \in \{1, \ldots, \ell\}$, where the set $D^i_k$ was defined in \eqref{equation D}.

\end{lemma}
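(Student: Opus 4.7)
The plan is to apply the measure recurrence of \cref{lemma directive sequence measure formula} to $\sigma'_{q_m} = \rho$, which has constant length $\ell$, and then carefully count the essential occurrences. Applying \eqref{eq formula2} to each $u$ of length $\ell k + j$ and summing over $u \in D^i_{\ell k + j}$, after swapping the order of summation one obtains
\begin{equation*}
\ell\, \mu^{(q_m)}(D^i_{\ell k + j}) = \sum_{w} \mu^{(q_{m+1})}(w)\, N(w), \qquad N(w) := \sum_{u \in D^i_{\ell k + j}} \lfloor \rho(w) \rfloor_u,
\end{equation*}
and the analogous identity for $\overline{D}^i_{\ell k + j}$. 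So the task reduces to computing $N(w)$ according to the type of $w$.

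Next I would restrict the set of contributing $w$. Direct inspection of $\rho$ shows that $\rho(c)_p \in \cA_i \cup \cA_{i+1}'$ exactly when $c \in \cA_i \cup \cA_{i+1}'$, so the first (respectively last) letter of any essential occurrence of a word in $D^i_{\ell k+j}$ forces $w_1$ (respectively $w_{|w|}$) to lie in $\cA_i \cup \cA_{i+1}'$. Hence only $w \in D^i_{|w|} \cup \overline{D}^i_{|w|}$ contribute, and an elementary length count shows that only $|w| \in \{k+1, k+2\}$ give non-zero $N(w)$.

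The key computation is the identity, valid for every $c \in \cA_i \cup \cA_{i+1}'$ and $p \in \{1, \ldots, \ell\}$,
\begin{equation*}
\eta(\rho(c)_p) = \begin{cases} v_p & \text{if } c \in \{a_i, a_{i+1}'\}, \\ \overline{v_p} & \text{if } c \in \{b_i, b_{i+1}'\}, \end{cases}
\end{equation*}
where $v = \tau_i(a_i)$. One checks this by distinguishing $p < \ell$ (where the value comes from $u_i = \tau_i(a_i)_{[1,\ell)}$, or directly from $\tau_i(a_i)_p$) and $p = \ell$ (using right-prolongability $v_\ell = a_i$ together with $\eta(a_{i+1}') = a_i$), and then invoking the mirror property $\tau_i(b_i) = \overline{v}$ for the $b$-cases. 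Given this identity, for an essential occurrence in $\rho(w)$ starting at position $p$ of $\rho(w_1)$ and ending at position $q$ of $\rho(w_{|w|})$, the resulting $u$ lies in $D^i_{\ell k+j}$ iff $v_p = v_q$ when $w \in D^i_{|w|}$, and iff $v_p \neq v_q$ when $w \in \overline{D}^i_{|w|}$.

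Finally I would enumerate the admissible starting positions: those with $|w| = k+1$ satisfy $q = p + j - 1 \in \{1, \dots, \ell\}$, and those with $|w| = k+2$ satisfy $q = p + j - 1 - \ell$. After a change of variable, the sums $\sum_p \delta^i_{p,q}$ over each of these ranges match $\tilde r^i_j$ and $r^i_j$ respectively, and their complements against the total number of essential positions give the coefficients $\ell - j - \tilde r^i_j$ and $j - r^i_j$ attached to $\overline{D}^i$. Substituting into the summed identity yields the first formula; the second follows by repeating the final step but now classifying $u \in \overline{D}^i_{\ell k + j}$, which interchanges the roles of ``$v_p = v_q$'' and ``$v_p \neq v_q$'' in the count. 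The main obstacle is this bookkeeping step: one must match the $(p,q)$-ranges arising from essential occurrences with the two alternative indexings used to define $r^i_j$ and $\tilde r^i_j$, and it is precisely the mirror and prolongability hypotheses on the $\tau_i$ that make the sums collapse to those quantities.
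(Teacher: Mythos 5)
Your proposal follows essentially the same route as the paper's proof: both apply the measure-transfer formula \eqref{eq formula2} for the constant-length morphism $\rho$, reduce to the contributing words $w \in D^i_{k+s} \cup \overline{D}^i_{k+s}$ for $s=1,2$ via the identities $\eta(\rho(a_i)) = \eta(\rho(a_{i+1}')) = \tau_i(a_i)$ together with the mirror property, and then count essential-occurrence positions to obtain the coefficients $r^i_j$, $\tilde{r}^i_j$ and their complements. The one caveat is that your final identification of the position sums with $r^i_j$ and $\tilde{r}^i_j$ is asserted rather than verified (the offset $q = p+j-1$ you derive differs by one from the offset $j$ appearing in the definition of $\tilde{r}^i_j$), but the paper's own proof is no more explicit at exactly this bookkeeping step.
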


\begin{proof}
    Notice that in this case $\sigma'_{q} = \rho $. 
    
    If $w \in \cL(X^{(q_m)}_{\boldsymbol{\sigma'}})$ for which $w_1 \in \cA_i \cup \cA_{i+1}'$, then $w \sqsubseteq \rho(u)$, where $u \in \cL(X^{(q_{m+1})}_{\boldsymbol{\sigma'}})$ and $u_1 \in \cA_i \cup \cA_{i+1}'$. This is equivalent to the condition $\eta(u_1) \in \cA_i$ . Since $\eta(\rho(a_i)) =\eta(\rho(a_{i+1}')) = \tau_i(a_i)$ and $\eta(\rho(b_i)) = \eta(\rho(b_{i+1}')) = \tau_i(b_i)$, for $u \in \cL(X^{(q_{m+1})}_{\boldsymbol{\sigma'}})$ satisfying $\eta(u_1) \in \cA_i$, we deduce that if $|u|=k+2$ with $\eta(u_1) = \eta(u_k)$, then
\begin{equation*}
   r^i_j = \sum_{j'=1}^j\1_{\eta(\rho(u_1)_{\ell -j -j'}) = \eta(\rho(u_{k+2})_{j'}) }
\end{equation*}
and when we consider $\eta(u_1) = \overline{\eta(u_{k+2})}$, $\displaystyle j - r^i_j = \sum_{j'=1}^j \1_{\eta(\rho(\overline{u}_1)_{\ell -j -j'}) = \eta(\rho(u_{k+2})_{j'}) }$. If $|u|=k+1$ with $\eta(u_1) = \eta(u_k)$
\begin{equation*}
    \Tilde{r}^i_j = \sum_{j'=1}^{\ell-j} \1_{\eta(\rho(u_1)_{j'}) = \eta(\rho(u_{k+1})_{j+j'}) } 
\end{equation*}
and when we consider $\eta(u_1) = \overline{\eta(u_{k+1})}$, $\displaystyle \ell - j - \Tilde{r}^i_j = \sum_{j'=1}^{\ell-j} \1_{\eta(\rho(\overline{u}_1)_{j'}) = \eta(\rho(u_{k+1})_{j+j'}) }$. 

Thus, the first equality of the lemma is a direct consequence of \eqref{eq formula2} and the second equality is completely analogous.

    \end{proof}

\begin{lemma} \label{lemma complete psi}
    If $\boldsymbol \sigma' = (\sigma'_q)_{q \in Q}$ and $\mu \in \cE$, then  for every $n \in \N$, let $q = n + \frac{n+1}{n+2}$, we get

\begin{align*}
       \ell \cdot \mu^{(q_m)} (D^i_{\ell k + j }) = &  r^i_j \cdot \mu^{(q_{m+1})} (C^i_{k+2}) + \Tilde{r}^i_j \cdot \mu^{(q_{m+1})} (C^i_{k+1}) \\
       &+ (j -r^i_j) \mu^{(q_{m+1})} (\overline{C}^i_{k+2}) + (\ell-j-\Tilde{r}^i_j) \mu^{(q_{m+1})} (\overline{C}^i_{k+1}) \\
        \\
        \ell \cdot \mu^{(q_m)} (\overline{D}^i_{\ell k + j }) = &  (j - r^i_j) \mu^{(q_{m+1})} (C^i_{k+2}) + (\ell-j- \Tilde{r}^i_j) \mu^{(q_{m+1})} (C^i_{k+1}) \\
       &+ r^i_j \cdot \mu^{(q_{m+1})} (\overline{C}^i_{k+2}) + \Tilde{r}^i_j \cdot  \mu^{(q_{m+1})} (\overline{C}^i_{k+1}) 
    \end{align*}
    
for $j \in \{1, \ldots, \ell\}$.

\end{lemma}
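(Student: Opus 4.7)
The plan is to mirror the proof of \cref{lemma complete rho}, with the only structural change being that the preceding alphabet is now $\Lambda_d$ rather than $\Lambda_d'$. The computational engine is the identity
\[
\eta(\psi(a_i)) = u_i\,a_i = \tau_i(a_i), \qquad \eta(\psi(b_i)) = v_i\,b_i = \tau_i(b_i),
\]
which matches verbatim the identities $\eta(\rho(a_i)) = \eta(\rho(a_{i+1}')) = \tau_i(a_i)$ used in the previous lemma. Consequently the integer counts $r^i_j$ and $\tilde r^i_j$, defined purely in terms of $\delta^i_{j,j'}$, transfer from the $\rho$-setting to the $\psi$-setting without modification.

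First I would fix $v \in D^i_{\ell k + j}$ and apply \eqref{eq formula2}, which gives
\[
\ell \cdot \mu^{(q_m)}(v) = \sum_{w \in W(v)} \lfloor \psi(w) \rfloor_v \cdot \mu^{(q_{m+1})}(w),
\]
where $w$ ranges over words in $\cL(X^{(q_{m+1})}_{\boldsymbol \sigma'}) \subset \Lambda_d^*$ of length $k+1$ or $k+2$. Because $\psi(c) \subset \cA_j \cup \cA_{j+1}'$ whenever $c \in \cA_j$ and these sets are pairwise disjoint across $j$, the boundary condition $v_1 \in \cA_i \cup \cA_{i+1}'$ forces the first letter of $w$ to lie in $\cA_i$; symmetrically for the last letter. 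Since $\Lambda_d$ contains no primed letters, this selects precisely the classes $C^i_{k+1}, C^i_{k+2}, \overline C^i_{k+1}, \overline C^i_{k+2}$ on the right-hand side, in place of the $D^i$/$\overline D^i$ classes appearing in \cref{lemma complete rho}.

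Summing over $v \in D^i_{\ell k + j}$ and interchanging the order of summation, I would partition the $\ell$ essential starting offsets inside $\psi(w_1)$ according to whether $v$ spans $k+1$ or $k+2$ consecutive letters of $w$, and evaluate the contribution of each alignment using $\eta \circ \psi|_{\cA_i^*} = \tau_i$. The counts $r^i_j, \tilde r^i_j, j - r^i_j$, and $\ell - j - \tilde r^i_j$ then appear as the number of offsets producing matching or non-matching endpoints in $\tau_i(w_1)$ and $\tau_i(w_{|w|})$, exactly as in \cref{lemma complete rho}, with the $C^i$-versus-$\overline C^i$ dichotomy on $w$ controlled by the mirror property of $\tau_i$. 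The second identity, for $\overline D^i_{\ell k + j}$, is obtained by the same argument with the roles of matching and anti-matching swapped; the principal, essentially bookkeeping, obstacle is to verify carefully that the partition of offsets is exhaustive and respects the essential-occurrence convention of \cref{lemma directive sequence measure formula}, after which the formula follows by direct substitution.
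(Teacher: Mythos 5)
Your proposal is correct and follows essentially the same route as the paper, whose proof simply observes that $\psi(a_i)=\rho(a_{i+1}')$ (equivalently, your identity $\eta(\psi(a_i))=\tau_i(a_i)$) and repeats the argument of \cref{lemma complete rho}, with the sole change--which you identify correctly--that level $q_{m+1}$ is now over the unprimed alphabet $\Lambda_d$, so the classes $D^i,\overline D^i$ on the right-hand side are replaced by $C^i,\overline C^i$.
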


\begin{proof}
    Noting $\sigma'_{q_m} = \psi $ and that $\psi(a_i)=\rho(a_{i+1}')$ for all $i \in \{0, \ldots, d-1\}$, one can repeat the steps of \cref{lemma complete rho} proof and deduce the formula. 
\end{proof}

\begin{lemma} \label{lemma complete phi}
    If $\boldsymbol \sigma' = (\sigma'_q)_{q \in Q}$ and $\mu \in \cE$, then  for every $q_m = n \in \N$, 

    \begin{align}
        \mu^{(n)} (C^i_{k}) &\leq \mu^{(q_{m+1})} (D^i_{k})  + \frac{2}{\ell^{n+1}} \label{ineq C_k}\\
        \mu^{(n)} (\overline{C}^i_{k}) &\leq \mu^{(q_{m+1})} (\overline{D}^i_{k}) + \frac{2}{\ell^{n+1}} \label{ineq over C_k}
    \end{align}

\end{lemma}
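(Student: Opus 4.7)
The strategy is to exploit that $\phi\colon(\Lambda_d')^*\to \Lambda_d^*$ is letter-to-letter. By \cref{rmk letters to letters}, equivalently by applying \eqref{eq formula1} with $|\phi|=1$ (so that $W(w)$ reduces to $|w|$-letter preimages), the measure $\mu^{(n)}$ is the pushforward $\phi_*\mu^{(q_{m+1})}$. This gives
$$
\mu^{(n)}(C_k^i) \;=\; \sum_{w \in C_k^i}\ \sum_{u \in \phi^{-1}(w)} \mu^{(q_{m+1})}(u),
$$
and the plan is to compare this sum with $\mu^{(q_{m+1})}(D_k^i)$, losing only the advertised $2/\ell^{n+1}$ slack.

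Since every $w\in C_k^i \subset \Lambda_d^k$ has $w_1=w_k\in \cA_i$ and $\phi^{-1}(c)=\{c,c'\}$ for unprimed $c\in \cA_i$ (where $c'$ is its primed counterpart in $\cA_i'$), I would split each preimage $u$ into four cases according to whether $u_1,u_k$ are primed or unprimed. The \emph{both unprimed} case forces $u_1=u_k\in \cA_i$, so $\eta(u_1)=\eta(u_k)$ and $u\in D_k^i$; this contribution is at most $\mu^{(q_{m+1})}(D_k^i)$. Each of the three remaining cases requires at least one of $u_1,u_k$ to lie in $\cA_i'$, so by shift-invariance of $\mu^{(q_{m+1})}$ their joint contribution is bounded by
$$
\mu^{(q_{m+1})}(\{u_1\in\cA_i'\})+\mu^{(q_{m+1})}(\{u_k\in\cA_i'\}) = 2\bigl(\mu^{(q_{m+1})}(a_i')+\mu^{(q_{m+1})}(b_i')\bigr).
$$

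The main technical step is then to show $\mu^{(q_{m+1})}(a_i')+\mu^{(q_{m+1})}(b_i')\leq 1/\ell^{n+1}$. I would iterate \eqref{eq formula1} through the $n$ consecutive $\rho$-steps separating level $q_{m+1}$ from the $\psi$-level $n+(n+1)/(n+2)$, and then through the single $\psi$-step up to the integer level $n+1$. The key combinatorial observation is that a primed letter $a_i'$ appears in $\rho(c)$ only when $c=a_i'$ (since $\rho(a_i')=u_{i-1}a_i'$ is the unique image containing $a_i'$), and in $\psi(c)$ only when $c=a_{i-1}$ (via $\psi(a_{i-1})=u_{i-1}a_i'$); in each case it appears exactly once and the morphism has length $\ell$. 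The iteration therefore collapses to $\mu^{(q_{m+1})}(a_i')=\ell^{-(n+1)}\mu^{(n+1)}(a_{i-1})$, and with the analogous identity for $b_i'$ the announced bound follows because $\mu^{(n+1)}$ is a probability measure. Combining the three ingredients proves \eqref{ineq C_k}.

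The proof of \eqref{ineq over C_k} runs identically once the condition $w_1=w_k$ is replaced by $w_1=\overline{w_k}$: in the "both unprimed" case this forces $u_1=\overline{u_k}$, hence $\eta(u_1)=\overline{\eta(u_k)}$, placing $u$ in $\overline{D}_k^i$; the three "bad" cases are unchanged and still produce slack at most $2/\ell^{n+1}$. I expect the main obstacle to be just the case-analysis bookkeeping—verifying that each of the four primed/unprimed configurations of $(u_1,u_k)$ is correctly matched with a subset of $D_k^i$ (resp.\ $\overline{D}_k^i$) in the "good" case—together with the frequency computation for primed letters, which is where the decay $1/\ell^{n+1}$ actually enters.
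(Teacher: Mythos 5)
Your proof is correct and follows essentially the same route as the paper's: push $\mu^{(q_{m+1})}$ forward under the letter-to-letter map $\phi$, note that the preimages of $C^i_k$ whose endpoints are both unprimed land in $D^i_k$ while the remaining ones are controlled by $2\bigl(\mu^{(q_{m+1})}(a_i')+\mu^{(q_{m+1})}(b_i')\bigr)$, and evaluate $\mu^{(q_{m+1})}(a_i')=\ell^{-(n+1)}\mu^{(n+1)}(a_{i-1})$ by tracking the single occurrence of $a_i'$ through $\rho^{n}\circ\psi$. The paper compresses this last computation into a one-line claim (bounding each primed letter by $1/(2\ell^{n+1})$ ``by induction''), but the substance of the two arguments is identical.
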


\begin{proof}
    Notice that $\sigma'_{n} = \phi $ is letter-to-letter so by  \cref{rmk letters to letters}
    \begin{equation*}
        \mu^{(n)} (w) = \sum_{u \in \phi^{-1}(w)} \mu^{(q_{m+1})} (u).
    \end{equation*}
    
    The set $\phi^{-1}(C_k^i)$ is contained in $U \cup U'$ where $U$ is the set of complete words $u$ with length $k$ and first letter in $\cA_i$ and $U'$ is the set of words $u$ with length $k$ and first or last letter in $\cA_i'$. With that,
    \begin{align*}
        \mu^{(n)} (C_k^i) \leq& \mu^{(q_{m+1})} (U)
        +  \mu^{(q_{m+1})} (U') \\
        \leq & \mu^{(q_{m+1})}(D^i_k) + 2( \mu^{(q_{m+1})}(a_i') + \mu^{(q_{m+1})}(b_i')) 
        \leq \mu^{(q_{m+1})}(D^i_k) + \frac{2}{\ell^{n+1}}.
    \end{align*}
    where the last inequality uses that, by induction, $ \mu^{(q_{m+1})}(a_i') = \frac{1}{\ell^{n+1}} \mu^{(n+1)}(a_{i-1}) \leq \frac{1}{2 \ell^{n+1}}$. Likewise, $ \mu^{(q_{m+1})}(b_i')  \leq \frac{1}{2 \ell^{n+1}}$. 
    Inequality \eqref{ineq over C_k} uses the same reasoning.
    
\end{proof}

\subsection{Upper bounds}

Recall the definition of $C^i_k$, $D^i_k$, $\overline{C}^i_k$ and $\overline{D}^i_k$ given by the equations \eqref{equation C} to \eqref{equation D bar}. 

\begin{lemma} \label{lemma i constant length bound}
    For every $\mu \in \cE$ $n \in \N$ and $k \geq 2$, 

    \begin{equation} \label{ineq max all levels}
        \mu^{(n)} (C^i_{k}) \leq \max_{\substack{k' =2, \ldots, \ell \\ q \in Q, q\geq n} }  \{ \mu^{(q)} (D^i_{k'}) , \mu^{(q)} (\overline{D}^i_{k'})  \} + \frac{\ell }{\ell -1 }\frac{2}{\ell^{n+1}}.
    \end{equation}
\end{lemma}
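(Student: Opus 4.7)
The plan is to iterate the three recurrences from Lemmas~\ref{lemma complete rho}, \ref{lemma complete psi}, and~\ref{lemma complete phi} so as to push $\mu^{(n)}(C_k^i)$ upward through the directive sequence $\boldsymbol\sigma'$, shrinking the complete-word length at each $\rho$- or $\psi$-step and paying a small additive error at each $\phi$-step. First I would apply Lemma~\ref{lemma complete phi} at the integer level $q=n$ to obtain
\[
\mu^{(n)}(C_k^i)\le\mu^{(q_{m+1})}(D_k^i)+\frac{2}{\ell^{n+1}},
\]
reducing the task to bounding $\mu^{(q_{m+1})}(D_k^i)$ at the subsequent $\rho$-level $q_{m+1}=n+1/(n+2)$.

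Next I would prove by strong induction on $K\ge 2$ that for every $q\in Q$ and every $\mu\in\cE$,
\[
\mu^{(q)}(D^i_K),\ \mu^{(q)}(\overline D^i_K)\ \le\ M(q)+E(q),
\]
where $M(q)=\max_{k'\in\{2,\ldots,\ell\},\,q'\in Q,\,q'\ge q}\{\mu^{(q')}(D^i_{k'}),\,\mu^{(q')}(\overline D^i_{k'})\}$ and $E(q)=\sum_{n'\ge\lceil q\rceil}2/\ell^{n'+1}$. The base case $K\in\{2,\ldots,\ell\}$ is immediate since the left-hand side then appears in the maximum defining $M(q)$. For $K>\ell$, writing $K=\ell k+j$ with $j\in\{1,\ldots,\ell\}$ and $k\ge 1$ gives new lengths $k+1,k+2$ with $k+2\le K/\ell+2<K$ (the strict decrease uses $\ell\ge 3$, which is forced by positivity, prolongability, and constant length of each $\tau_i$). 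If $\sigma'_q=\rho$, Lemma~\ref{lemma complete rho} writes $\mu^{(q)}(D^i_K)$ as a convex combination (after dividing by $\ell$) of four $D,\overline D$ values at level $q_{m+1}$ with smaller length, so bounding by the maximum and invoking the inductive hypothesis closes this case without adding to $E$. If $\sigma'_q=\psi$, Lemma~\ref{lemma complete psi} yields the analogous reduction but to $C,\overline C$ sets at the subsequent integer level $n+1$; Lemma~\ref{lemma complete phi} then converts these back to $D,\overline D$ at level $q_{m+2}$, contributing exactly the summand $2/\ell^{n+2}$ needed to upgrade $E(q_{m+2})$ to $E(q)$.

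Combining the opening step with the inductive bound at $q_{m+1}$ and summing the geometric series gives
\[
\mu^{(n)}(C_k^i)\le M(q_{m+1})+E(q_{m+1})+\frac{2}{\ell^{n+1}}\le M(n)+\sum_{n'\ge n}\frac{2}{\ell^{n'+1}}=M(n)+\frac{\ell}{\ell-1}\cdot\frac{2}{\ell^{n+1}},
\]
which is the stated inequality.

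The main obstacle is the bookkeeping of additive errors across the $\psi\to\phi$ transition: each $\psi$-step exits the $D,\overline D$ family and must be paired with a subsequent $\phi$-step to return, contributing one summand to the error sum. Verifying that the $\phi$-steps encountered correspond exactly to the integer levels $n,n+1,n+2,\ldots$ (each visited at most once) is what makes the error sum collapse cleanly to the closed form $\frac{\ell}{\ell-1}\cdot\frac{2}{\ell^{n+1}}$.
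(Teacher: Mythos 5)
Your proposal is correct and follows essentially the same route as the paper: apply \cref{lemma complete phi} at each integer level (paying $2/\ell^{n'+1}$ there and only there), use the convexity of the recurrences in \cref{lemma complete rho,lemma complete psi} to bound the $D,\overline D$ quantities by maxima at deeper levels while dividing the word length by $\ell$, and sum the resulting geometric series to get the factor $\tfrac{\ell}{\ell-1}$. The only differences are presentational — you package the iteration as a strong induction on $K$ and observe that $\ell\geq 3$ gives strict decrease of the length index, whereas the paper runs the explicit descent $k_t=\ell(k_{t+1}+t)+m_t$ and treats the $k_1=\ell-1$ case by one extra iteration.
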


\begin{remark*}
    Following what we discuss in \cref{section invariant measures} in the right hand side, if $q$ is an integer, $\mu^{(q)}$ is supported in $\Lambda_d^{\Z}$ and therefore it can be studied as a measure in $(\Lambda_d')^{\Z}$. In that context, $\mu^{(q)}(D^i_{k'}) =  \mu^{(q)}(C^i_{k'}) $ and $\mu^{(q)}(\overline{D}^i_{k'}) =  \mu^{(q)}(\overline{C}^i_{k'}) $, because $\mu^{(q)}(w) = 0$ whenever $w$ contains a letter in $\Lambda_d' \backslash \Lambda_d$.
\end{remark*}

\begin{proof}
    Combining Lemmas \ref{lemma complete rho} and  \ref{lemma complete psi} we deduce that for $q_m \in Q \backslash \N$, $\mu^{(q_m)} (D^i_{\ell k + j })$ and $\mu^{(q_m)} (\overline{D}^i_{\ell k + j })$ are convex combinations of $\mu^{(q_{m+1})} (D^i_{k + s })$ and $\mu^{(q_{m+1})} (\overline{D}^i_{k + s})$ for $s=1,2$. Therefore, if $q_m \in Q \backslash \N$
    
    \begin{equation*}
        \mu^{(q_m)} (D^i_{\ell k + j }) \leq \max_{s=1,2}\{ \mu^{(q_{m+1})} (D^i_{k + s }), \mu^{(q_{m+1})} (\overline{D}^i_{k + s})\}
    \end{equation*}
    and the same bound holds for $\mu^{(q_m)} (\overline{D}^i_{\ell k + j })$. Likewise, using \cref{lemma complete phi} for $q_m \in\N$, 
    \begin{align*}
        \mu^{(q_m)} (D^i_{k}) & \leq \mu^{(q_{m+1})} (D^i_{k }) + \frac{2}{\ell^{n+1}} \\
        \mu^{(q_m)} (\overline{D}^i_{k}) &\leq  \mu^{(q_{m+1})} (\overline{D}^i_{k }) + \frac{2}{\ell^{n+1}}
    \end{align*}
    Notice that for $2 \leq k \leq \ell$, the proposition is trivial. Thus, fix $k > \ell $, there exists an integer $k_1 \in \N$ and $m_1 \in \{1, \ldots, \ell\}$ such that $k = \ell \cdot k_1 + m_1 $. 

    Now, take $q_m = n \in \N$, then by the previous inequalities
    \begin{align*}
        \mu^{(n)} (C^i_{k}) & \leq \mu^{(q_{m+1})} (D^i_{k}) + \frac{2}{\ell^{n+1}}  \label{ineq first step}\\
        \mu^{(q_{m+1})} (D^i_{k}) &
        \leq \max_{s=1,2}\{ \mu^{(q_{m+2})} (D^i_{k_1 + s }), \mu^{(q_{m+2})} (\overline{D}^i_{k_1 + s})\} 
    \end{align*}

    If $k_1 \in \{1, \ldots, \ell -2\}$ we are done. If $k_1 = \ell -1$, we need to control the values indexed by $k_1+2 = \ell +1$, but for that we need to iterate the argument one more time. Otherwise, that is if $k_1 \geq \ell $, we can find $k_2 \geq 1$ and $m_2 \in \{1, \ldots, \ell\}$ such that $k_1 + 1 = \ell k_2 + m_2$ (similarly for $k_1 + 2 = \ell k_2 + m_2 +1$ or, if $m_2 = \ell$, $k_1 + 2 = \ell (k_2+1) + 1$). With that decomposition one can bound the right hand side of the second equality by $\displaystyle \max_{s = 1, 2, 3} \{ \mu^{(q_{m+3})} (D^i_{k_2 + s}), \mu^{(q_{m+3})} (\overline{D}^i_{k_2 + s}) \}$. 

    Consider the sequence, $(k_t)_{t \in \N}$ and $(m_t)_{t \geq 1}$ such that $k_t \geq 0$ and $m_t \in \{1,\ldots, \ell \}$ and are defined as follow, $k_0 = k$, $k_0 = \ell k_1 + m_1$ and inductively $k_t = \ell (k_{t+1} + t) + m_t $. Then eventually $k_t = 0$ for some $t \in \N$. With that, one can iterate the previous argument a finite amount of time and be able to express everything with only values $k' \in \{2, \ldots, \ell \}$. The only problem is when $n \leq \overline{n} = q_{m+t} \in \N$ in that case, we are force to add the term $ 2/ \ell^{\overline{n}+1}$. So we get 
    \begin{equation*}
        \mu^{(n)} (C^i_{k}) \leq \max_{\substack{k' =2, \ldots, \ell \\ q \in Q, n \leq q < N} }  \{ \mu^{(q)} (D^i_{k'}) , \mu^{(q)} (\overline{D}^i_{k'})  \} + \frac{2}{\ell^{n+1}} + \frac{2}{\ell^{n+2}} + \cdots + \frac{2}{\ell^{N}} 
    \end{equation*}
for some $N \geq n$, but that value is bounded by 
    $$\max_{\substack{k' =2, \ldots, \ell \\ q \in Q, q \geq n} }  \{ \mu^{(q)} (D^i_{k'}) , \mu^{(q)} (\overline{D}^i_{k'})  \} + \sum_{s \geq 1} \frac{2}{\ell^{n+s}}, $$
    which finish the proof. \vspace{-0.5em}    
\end{proof}

\begin{proposition} \label{thrm combination bound max}
    For every $i \in \{0, \ldots, d-1\}$, 

    \begin{equation*}
        \delta_{\mu_i} \leq \max_{k=2, \ldots, \ell  }  \left\{ \sum_{ w \in \cC \cA_i^k} \nu_i ( w) ,\sum_{w \in \overline{\cC} \cA_i^k} \nu_i (w)  \right\}
    \end{equation*}
    where the notation $\cC \cA_i^k$ is introduced in \eqref{eq complete W} and $\overline{\cC}\cA^k_i$ is the set of words $w \in \cA_i^*$ of length $k$ such that $w_1 = \overline{w}_k$ 
\end{proposition}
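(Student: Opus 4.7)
My plan is to combine Theorem~\ref{theorem constant length delta mu}, Theorem~\ref{thrm gluing technique} and Lemma~\ref{lemma i constant length bound} with a limit argument. First, applying Theorem~\ref{theorem constant length delta mu} to the (recognizable, constant-length, primitive) directive sequence $\boldsymbol\sigma'$,
\[
\delta_{\mu_i} \;=\; \lim_{n\to\infty}\,\sup_{k\geq 2}\,\sum_{w \in \cC\Lambda_d^{k}} \mu_i^{(n)}(w) \;=\; \lim_{n\to\infty}\,\sup_{k\geq 2}\,\sum_{j=0}^{d-1} \mu_i^{(n)}(C_k^{j}).
\]
For every $j\neq i$ the term $\mu_i^{(n)}(C_k^{j})$ is dominated by $\mu_i^{(n)}(a_j)+\mu_i^{(n)}(b_j)$, which tends to $0$ uniformly in $k$ by Theorem~\ref{thrm gluing technique}. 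Hence the off-diagonal blocks disappear in the limit, leaving $\delta_{\mu_i} \leq \limsup_{n}\sup_{k\geq 2} \mu_i^{(n)}(C_k^{i})$.

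Second, I invoke Lemma~\ref{lemma i constant length bound} to bound each $\mu_i^{(n)}(C_k^{i})$ by an expression independent of $k$:
\[
\mu_i^{(n)}(C_k^{i}) \;\leq\; \max_{\substack{k'\in\{2,\ldots,\ell\} \\ q\in Q,\; q\geq n}} \Bigl\{\mu_i^{(q)}(D_{k'}^{i}),\; \mu_i^{(q)}(\overline D_{k'}^{i})\Bigr\} \;+\; \frac{2\ell/(\ell-1)}{\ell^{n+1}}.
\]
Taking $\sup_{k\geq 2}$ and then $\limsup_{n\to\infty}$ kills the error term, and it suffices to show that the remaining $\limsup$ of the displayed maximum is bounded above by $\max_{k=2,\ldots,\ell}\{\nu_i(\cC\cA_i^{k}),\nu_i(\overline\cC\cA_i^{k})\}$.

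Third, I treat the two kinds of levels separately. At an integer level $q$, the remark following Lemma~\ref{lemma i constant length bound} gives $\mu_i^{(q)}(D_{k'}^{i})=\mu_i^{(q)}(C_{k'}^{i})$ and $\mu_i^{(q)}(\overline D_{k'}^{i})=\mu_i^{(q)}(\overline C_{k'}^{i})$; since the mass outside $\cA_i$ vanishes, Theorem~\ref{thrm gluing technique} forces these to converge to $\nu_i(\cC\cA_i^{k'})$ and $\nu_i(\overline\cC\cA_i^{k'})$ respectively. At a non-integer level $q$, with $\sigma'_q\in\{\rho,\psi\}$, I use formula \eqref{eq formula2} to rewrite
\[
\mu_i^{(q)}(D_{k'}^{i}) \;=\; \frac{1}{\ell}\sum_{|w|\leq 2} N(w)\,\mu_i^{(q_+)}(w),
\]
where $N(w)$ counts essential occurrences of $D_{k'}^{i}$-words inside $\psi(w)$ or $\rho(w)$; iterating until the first integer level is reached, this expresses $\mu_i^{(q)}(D_{k'}^{i})$ as a convex combination (with total weight $1$) of values $\mu_i^{(n)}(C_{k''}^{i})$ and $\mu_i^{(n)}(\overline C_{k''}^{i})$ for $k''\leq\ell$, each of which tends to $\nu_i(\cC\cA_i^{k''})$ or $\nu_i(\overline\cC\cA_i^{k''})$. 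The same argument with $\overline D_{k'}^{i}$ in place of $D_{k'}^{i}$ completes the bound. The main obstacle is the bookkeeping of the coefficients $N(w)$: one must verify that together the weights of $D_{k'}^{i}$- and $\overline D_{k'}^{i}$-occurrences exhaust all length-$k'$ subwords of $\psi(c)$ and $\rho(c)$ whose endpoints lie in $\cA_i\cup\cA_{i+1}'$, so that the total weight equals $\ell$ and the convex-combination bound genuinely lies below the desired maximum.
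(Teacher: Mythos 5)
Your first two steps track the paper's own proof: splitting off the words whose first letter lies outside $\cA_i$ (whose total mass vanishes by \cref{thrm gluing technique}) and then invoking \cref{lemma i constant length bound} is exactly how the paper reduces the problem to bounding $\mu_i^{(q)}(D^i_{k'})$ and $\mu_i^{(q)}(\overline{D}^i_{k'})$ for $k' \in \{2,\dots,\ell\}$ and large $q \in Q$; your treatment of the integer levels also matches. The gap is in your third step, at the non-integer levels. The bookkeeping you flag as the ``main obstacle'' is actually fine (the four coefficients in \cref{lemma complete rho} sum to $r^i_j + \tilde{r}^i_j + (j-r^i_j) + (\ell - j - \tilde{r}^i_j) = \ell$), but the convex combination itself does not do what you claim. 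Applying the recurrence to $D^i_{k'}$ with $2 \le k' \le \ell$ forces $k=0$ and $j=k'$, so the right-hand side contains the term $\tilde{r}^i_{k'}\cdot\mu^{(q_{m+1})}(D^i_{1})$. But $D^i_1 = \cA_i \cup \cA_{i+1}'$, so $\mu_i^{(q_{m+1})}(D^i_1)$ tends to $1$, not to $\nu_i(\cC\cA_i^1)=0$ (complete words have length at least $2$ by \eqref{eq complete W}). The coefficient $\tilde{r}^i_{k'}$ is strictly positive whenever $\tau_i(a_i)$ contains two equal letters at distance $k'$, which certainly happens for the substitutions $\zeta_L$ used later. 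Hence after iterating to the next integer level your convex combination places positive weight on a quantity converging to $1$, and the resulting upper bound degenerates to (essentially) $1$. This is precisely why the proof of \cref{lemma i constant length bound} only runs the recursion for $k > \ell$ and stops as soon as the length falls into $\{2,\dots,\ell\}$: the recurrences cannot be used to push lengths below $2$.

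The paper closes the remaining case by a different device. For non-integer $q$ it considers the pushforward $\eta_*\mu_i^{(q)}$ and reruns the frequency argument of \cref{thrm gluing technique} on the subshift $\eta(X^{(q)}_{\boldsymbol\sigma'})$ to obtain $\eta_*\mu_i^{(q)}(w) \le \nu_i(w) + \varepsilon$ for every $w \in \cA_i^*$ with $|w| \le \ell$; since $w \in D^i_{k'}$ implies $\eta(w) \in \cC\cA_i^{k'}$, this gives $\mu_i^{(q)}(D^i_{k'}) \le \nu_i(\cC\cA_i^{k'}) + 2^{k'}\varepsilon$, and similarly for $\overline{D}^i_{k'}$. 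To complete your argument you would need a step of this kind --- a direct control of the level-$q$ measures of short words --- rather than another pass through the complete-word recurrences.
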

    
\begin{proof} First notice that, for every $(k_t)_{t \in \N}$ a possibly constant sequence of integers greatest or equal than $2$, 
\begin{align*}
    \lim_{t \to \infty} \sum_{w \in \cC \Lambda_d^{k_t}} \mu_i^{(t)} (w) &= \lim_{t \to \infty} \sum_{w \in \cC \Lambda_d^{k_t}, w_1 \in \cA_i} \mu_i^{(t)} (w) + \lim_{t \to \infty} \sum_{w \in \cC \Lambda_d^{k_t}, w_1 \not \in \cA_i} \mu_i^{(t)} (w)  \\
    &\leq \lim_{t \to \infty} \mu_i^{(t)} (C_{k_t}^i) + \lim_{t \to \infty} \sum_{c \in \Lambda_d \backslash \cA_i} \mu_i^{(t)} (c) = \lim_{t \to \infty} \mu_i^{(t)} (C_{k_t}^i) 
\end{align*}

Therefore, by \cref{theorem constant length delta mu} we get that there exists $(k_t)_{t \in \N}$ a possibly constant sequence of integers greatest or equal than $2$ such that 

\begin{align*}
    \delta_{\mu_i} &= \lim_{t \to \infty} \sum_{w \in \cC \Lambda_d^{k_t}}  \mu_i^{(t)} (w) \leq \lim_{t \to \infty} \mu_i^{(t)} (C_{k_t}^i)  \leq \lim_{t \to \infty}   \max_{\substack{k' =2, \ldots, \ell \\ q \in Q, q\geq t} }  \{ \mu^{(q)} (D^i_{k'}) , \mu^{(q)} (\overline{D}^i_{k'})  \}   
\end{align*}
where the last inequality is a consequence of \eqref{ineq max all levels}.

    Thus, we only have to control the values of $\mu^{(q)}(D^i_k)$ and $\mu^{(q)}(\overline{D}^i_k)$ for $k \in \{2, \ldots, \ell\}$ and big $q \in Q$. This is already controlled when $q$ is an integer because, \cref{thrm gluing technique} implies that for every $\epsilon>0$, there exists $N\geq 1$ such that for every $n \geq N$ and every word $w \in \cA^*_i$, with $|w|\leq \ell$,  $\mu_i^{(n)}(w) \leq \nu_i(w) + \varepsilon$ and $w \not \in \cA_i^*$, $\mu_i^{(n)}(w) \leq  \frac{\varepsilon}{2}$.

    Now, fix $q = n_1 + \frac{m'}{n_1 + 2} \not \in \N$ and $n_1 \geq N$ , notice that for $j \neq i$, $$\mu^{(q)}_i(D^j_k) \leq \sum_{c \in \cA_j \cup \cA_{j+1}'} \mu^{(q)}_i(c)    \leq  \mu_i^{(n_1 +1)}(a_j) + \mu_i^{(n_1 +1)}(a_j) \leq \varepsilon.$$
    
 If one repeats a proof similar to the one of \cref{thrm gluing technique} for the subshift $\eta(X_{\boldsymbol \sigma'}^{(q)})$, we get that for every $w \in \cA^*_i$, with $|w|\leq \ell$, $\eta_*\mu_i^{(q)}(w) \leq \nu_i(w) + \varepsilon$. Noting that, for $k' \leq \ell$, if $w \in D^i_{k'}$ then $\eta(w) \in \cC \cA_i^{k'}$ we deduce 

    \begin{equation*}
        \mu^{(q)}_i (D^i_{k'}) \leq \eta_* \mu^{(q)}_i (\cC \cA_i^{k'}) 
        \leq \sum_{u \in \cC \cA_i^{k'}}  (\nu_i (u) + \varepsilon) \leq 2^{k'} \varepsilon + \nu_i (\cC \cA_i^{k'}).
    \end{equation*}
     Similarly $\mu^{(q)}_i (\overline{D}^i_{k'})  \leq 2^{k'} \varepsilon + \nu_i (\overline{\cC} \cA_i^{k'})$. Therefore for every $\varepsilon >0$ there exists $N$, such that for every $n \geq N$
    \begin{equation*} 
        \max_{\substack{k' =2, \ldots, \ell \\ q \in Q, q\geq n} }  \{ \mu^{(q)} (C^i_{k'}) , \mu^{(q)} (\overline{C}^i_{k'})  \} \leq 2^{\ell} \varepsilon +  \max_{k=2, \ldots, \ell  }  \left\{\nu_i (\cC \cA_i^{k'}),\nu_i (\overline{\cC} \cA_i^{k'})  \right\} 
    \end{equation*}
    Thus taking limit $n \to \infty$ and $\varepsilon \to 0$ and  we conclude. 
\end{proof}

\subsection{System with multiple partial rigidity rates} We use the result of the last section of \cite{donoso_maass_radic2023partial}, for that fix $L \geq 6$ and let $\zeta_L \colon \cA^* \to \cA^*$ given by 
\begin{align*}
    a \mapsto a^Lb \\
    b \mapsto b^La.
\end{align*}
In particular $\zeta_L^2 $ is a prolongable and mirror morphism. 

\begin{proposition}\cite[Proposition 7.17]{donoso_maass_radic2023partial} \label{prop very rigid family}
Fix $L \geq 6$ and let $(X_{\zeta_{L}}, \cB, \nu, S)$ be the substitution subshift given by $\zeta_L \colon \cA^* \to \cA^*$,  then

\begin{equation*}
   \delta_{\nu} = \nu(aa) + \nu(bb) = \max_{k\geq 2  }  \left\{ \sum_{w \in \cC \cA^k}  \nu (w) ,\sum_{w \in \overline{\cC} \cA^k}  \nu (w)  \right\} = \frac{L-1}{L+1}
\end{equation*}

\end{proposition}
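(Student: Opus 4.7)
My plan is to apply \cref{theorem constant length delta mu} to the substitution subshift $(X_{\zeta_L},S)$, obtain a lower bound at $k=2$, and then establish a matching upper bound valid for all $k\geq 2$. The hypotheses are easy to check: $\zeta_L$ is constant-length $L+1$, its composition matrix $\begin{pmatrix}L & 1\\ 1 & L\end{pmatrix}$ is positive (so $\zeta_L$ is primitive), and $\zeta_L$ is left-permutative because the first letters of $\zeta_L(a)$ and $\zeta_L(b)$ differ, so the constant directive sequence is recognizable. The substitution subshift is uniquely ergodic with invariant measure $\nu$ satisfying $\nu(a)=\nu(b)=1/2$, and all the induced measures $\nu^{(n)}$ coincide with $\nu$, so \cref{theorem constant length delta mu} simplifies to $\delta_\nu=\sup_{k\geq 2}\sum_{w\in\cC\cA^k}\nu(w)$.

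For the lower bound I would use \cref{lemma directive sequence measure formula} applied to the word $u=aa$. Since $W(aa)=\{w\colon |w|\leq 2\}$, the only nonzero essential-occurrence counts are $\lfloor\zeta_L(a)\rfloor_{aa}=L-1$ (the pairs inside the block $a^L$) and $\lfloor\zeta_L(ba)\rfloor_{aa}=1$ (at the boundary of $b^L a \cdot a^L b$). Combined with the mirror symmetry $\nu(aa)=\nu(bb)$, $\nu(ab)=\nu(ba)$ coming from $\zeta_L$ being a mirror morphism, and the shift-invariance $\nu(aa)+\nu(ab)=\nu(a)=1/2$, this produces a single linear equation for $\nu(aa)$ whose solution gives the asserted value of $\nu(aa)+\nu(bb)$. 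Applying \cref{theorem constant length delta mu} with the constant sequence $k_n=2$ then yields $\delta_\nu\geq \nu(aa)+\nu(bb) = \frac{L-1}{L+1}$.

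The matching upper bound is the main obstacle. I would exploit the highly restricted run-length structure of $X_{\zeta_L}$: iterating $\zeta_L$ only produces runs of $a$'s and of $b$'s whose lengths lie in a sparse arithmetic family determined by $L$ (coming from the fact that the sole block boundaries inside $\zeta_L^{n+1}(c)$ come from $\zeta_L(a)\zeta_L(b)$ or $\zeta_L(b)\zeta_L(a)$ mergers). For $L\geq 6$, this gap ensures that almost all complete words $aua\in\cC\cA^k$ with $k\geq 3$ have $\nu(w)=0$ because the middle block $u$ would require a forbidden intermediate run-length; the same is true for $\overline\cC\cA^k$. I would classify the admissible ones, iterate \cref{lemma directive sequence measure formula} to express their measures in terms of $\nu$ on $2$-letter words, and check by induction on $k$ (grouping complete $k$-words according to how many full $\zeta_L$-blocks they straddle) that both $\sum_{w\in\cC\cA^k}\nu(w)$ and $\sum_{w\in\overline\cC\cA^k}\nu(w)$ are bounded above by $\frac{L-1}{L+1}$, with equality only at $k=2$ for the complete sum. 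Combining this upper bound with the lower bound then yields the full chain of equalities in the proposition.
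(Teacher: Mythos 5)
First, a point of context: the paper does not prove this proposition at all — it is imported verbatim from \cite[Proposition 7.17]{donoso_maass_radic2023partial} — so you are reproving an external result. Your overall shape (unique ergodicity plus \cref{theorem constant length delta mu} to reduce $\delta_\nu$ to $\sup_{k\geq 2}\sum_{w\in\cC\cA^k}\nu(w)$, then a lower bound at a specific $k$ and a matching upper bound for all $k$) is the right one and matches how the cited paper proceeds. However, both concrete steps you give fail as written. For the lower bound: with the data you yourself supply ($\lfloor\zeta_L(a)\rfloor_{aa}=L-1$, $\lfloor\zeta_L(ba)\rfloor_{aa}=1$, $\nu(a)=\tfrac12$, $\nu(ba)=\tfrac12-\nu(aa)$), equation \eqref{eq formula1} reads $(L+1)\nu(aa)=\tfrac{L-1}{2}+\tfrac12-\nu(aa)$, whose solution is $\nu(aa)=\tfrac{L}{2(L+2)}$, hence $\nu(aa)+\nu(bb)=\tfrac{L}{L+2}$, which is \emph{not} $\tfrac{L-1}{L+1}$. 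So the equation does not ``give the asserted value''; carrying out the arithmetic exposes an off-by-one mismatch between the definition $\zeta_L(a)=a^Lb$ (length $L+1$) and the stated constant, which is the value belonging to $a\mapsto a^{L-1}b$ of length $L$ (consistent with the paper's final remark computing $|\zeta_M^k(a)|=M^k$). A correct proof must resolve this discrepancy rather than assert agreement.

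For the upper bound, the proposed mechanism does not work. The heuristic that ``almost all complete words of length $k\geq 3$ have measure zero because of forbidden run-lengths'' is false for small $k$ (all four words of $\cC\cA^3$ have positive measure, and in fact $\sum_{w\in\cC\cA^3}\nu(w)=\tfrac{L-1}{L+1}$ by the same kind of computation as above), and in any case the quantity to bound is $\sum_{w\in\cC\cA^k}\nu(w)=\nu(\{x\colon x_0=x_{k-1}\})$, a correlation which is not controlled by discarding individual words of measure zero — a priori it could be close to $1$ along a subsequence of $k$'s (that is exactly what happens for rigid systems). The missing idea is the recursion expressing $\nu(\cC\cA^{\ell k+j})$ and $\nu(\overline{\cC}\cA^{\ell k+j})$ as convex combinations of $\nu(\cC\cA^{k+1})$, $\nu(\cC\cA^{k+2})$, $\nu(\overline{\cC}\cA^{k+1})$, $\nu(\overline{\cC}\cA^{k+2})$ with coefficients built from the $r_j$, $\Tilde{r}_j$ (the mechanism of \cref{lemma complete rho} and \cref{lemma i constant length bound} in this paper, i.e.\ \cite[Lemma 7.7]{donoso_maass_radic2023partial}); this reduces the supremum over all $k\geq 2$ to the finitely many values $k\in\{2,\dots,\ell\}$ together with the anti-complete sums, and it is in checking those finitely many base cases that the hypothesis $L\geq 6$ actually enters. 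Without that reduction your induction on $k$ has no engine, so the upper bound — the main obstacle, as you say — is not established.
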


Now we can give a detailed version of \cref{main thrm} stated in the introduction. For that, as for \cref{cor one substitution}, we write $\zeta_L \colon \cA_i^* \to \cA_i^*$ even if it is originally define in the alphabet $\cA$.

\begin{theorem} \label{thrm final result}
    For $L \geq 6$, let $\boldsymbol \sigma $ be the directive sequence of glued substitutions $ \boldsymbol \sigma = ( \Gamma(\zeta_{L^{2^{i+1}}}^{(n+1)2^{d-i}} \colon i =0, \ldots,d-1))_{n \in \N}$. That is 
    \begin{equation*}
        \begin{array}{cc}
             \sigma_n(a_i) &= \kappa(\zeta_{L^{2^{i+1}}}^{(n+1)2^{d-i}}(a_i))\\
        \sigma_n(b_i) &= \kappa(\zeta_{L^{2^{i+1}}}^{(n+1)2^{d-i}}(b_i)) 
        \end{array}  \quad \text{ for } i \in \{0 , \ldots, d-1\}.
    \end{equation*}
    Then,  
    \begin{equation} \label{final eq}
        \delta_{\mu_i} = \frac{L^{2^{i+1}}-1}{L^{2^{i+1}}+1}
    \end{equation}
    and the rigidity sequence is $(h^{(n)})_{n \in \N}$.
    
\end{theorem}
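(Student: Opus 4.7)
The plan is to apply the tools of Sections 3 and 4 to the family $\tau_i=\zeta_{L^{2^{i+1}}}^{2^{d-i}}$, so that $\tau_i^{n+1}=\zeta_{L^{2^{i+1}}}^{(n+1)2^{d-i}}$ and $\sigma_n=\Gamma(\tau_0^{n+1},\ldots,\tau_{d-1}^{n+1})$ matches the directive sequence of the statement. First I would verify that each $\tau_i\colon\cA_i^*\to\cA_i^*$ is a constant-length, positive, prolongable mirror substitution: positivity and constant length are immediate from the definition of $\zeta_M$, while prolongability and the mirror property follow from the exponent $2^{d-i}$ being even, using that $\zeta_M^2$ already has these properties (as noted right before the theorem).

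With these hypotheses in place, \cref{thrm gluing technique} yields a minimal $\cS$-adic subshift $(X_{\boldsymbol\sigma},S)$ with exactly $d$ ergodic measures $\mu_0,\ldots,\mu_{d-1}$, and $\lim_{n\to\infty}\mu_i^{(n)}(w)=\nu_i(w)$ for every $w\in\cA_i^*$, where $\nu_i$ is the unique invariant measure of $X_{\tau_i}=X_{\zeta_{L^{2^{i+1}}}}$ (powers of a primitive substitution generate the same subshift and the same unique invariant measure). By \cref{prop very rigid family} we then have $\delta_{\nu_i}=\nu_i(aa)+\nu_i(bb)=\frac{L^{2^{i+1}}-1}{L^{2^{i+1}}+1}$, and \cref{cor delta smaler} gives the lower bound $\delta_{\mu_i}\geq\delta_{\nu_i}$.

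For the matching upper bound I would invoke \cref{thrm combination bound max}, which yields $\delta_{\mu_i}\leq\max_{2\leq k\leq\ell}\{\nu_i(\cC\cA_i^k),\nu_i(\overline{\cC}\cA_i^k)\}$. Since \cref{prop very rigid family} identifies the global maximum of both quantities over $k\geq 2$ as $\frac{L^{2^{i+1}}-1}{L^{2^{i+1}}+1}$, and this maximum is already attained at $k=2$ via $\nu_i(a_ia_i)+\nu_i(b_ib_i)$, the restricted maximum of \cref{thrm combination bound max} still equals $\delta_{\nu_i}$, forcing $\delta_{\mu_i}=\frac{L^{2^{i+1}}-1}{L^{2^{i+1}}+1}$ as claimed in \eqref{final eq}. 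For the rigidity sequence I would apply \cref{theorem constant length delta mu} to the original $\boldsymbol\sigma$ with the constant choice $k_n=2$: the gluing limit together with the remark after \cref{thrm gluing technique} gives $\lim_{n\to\infty}\sum_{w\in\cC\Lambda_d^2}\mu_i^{(n)}(w)=\nu_i(aa)+\nu_i(bb)=\delta_{\mu_i}$ simultaneously for every $i$, so the associated partial rigidity sequence is $((k_n-1)h^{(n)})_{n\in\N}=(h^{(n)})_{n\in\N}$, identical across all ergodic measures.

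The main technical point to watch is reconciling the differing lengths $|\tau_i|=(L^{2^{i+1}}+1)^{2^{d-i}}$ with the standing assumption in Section 4 that all $\tau_i$ share a single common length $\ell$; the decomposition $\sigma_n=\phi\circ\rho^n\circ\psi$ and the recurrence lemmas for $C^i_k$, $D^i_k$, $\overline{C}^i_k$, $\overline{D}^i_k$ rely on this. A careful inspection — using $\ell=\ell_i=|\tau_i|$ on an index-by-index basis, so that \cref{thrm combination bound max} is applied separately for each $i$ — is needed to see that the chain of bounds still gives the desired inequality $\delta_{\mu_i}\leq\frac{L^{2^{i+1}}-1}{L^{2^{i+1}}+1}$. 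Once this is verified, the steps above combine to prove \eqref{final eq} and the rigidity-sequence claim, and hence supply the detailed version of \cref{main thrm}.
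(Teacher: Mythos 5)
Your proposal follows the paper's proof of \cref{thrm final result} essentially step for step: the hypotheses of \cref{thrm gluing technique} are checked for $\tau_i=\zeta_{L^{2^{i+1}}}^{2^{d-i}}$, the value $\delta_{\nu_i}=\frac{L^{2^{i+1}}-1}{L^{2^{i+1}}+1}$ comes from \cref{prop very rigid family}, the two inequalities come from \cref{cor delta smaler} and \cref{thrm combination bound max}, and the rigidity sequence comes from \cref{theorem constant length delta mu} with $k_n=2$, exactly as in the paper. The one place you part company with the paper is the ``technical point'' you flag at the end, and you are right to flag it: the paper's remark computes $|\zeta_{L^{2^{i+1}}}^{2^{d-i}}(a_i)|=(L^{2^{i+1}})^{2^{d-i}}=L^{2^{d+1}}$, a length independent of $i$, and on that basis treats the common-length hypothesis of \cref{thrm gluing technique} (and the standing assumption of Section \ref{section computation partial rigidity}) as satisfied. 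Since $\zeta_M(a)=a^Mb$ has length $M+1$, the correct value is $(L^{2^{i+1}}+1)^{2^{d-i}}$, which does depend on $i$, exactly as you compute. So the reconciliation you defer to ``a careful inspection'' is not carried out in the paper either; its proof silently relies on the miscomputed lengths. Your write-up is therefore the same argument as the paper's, with an honestly acknowledged gap where the paper has an unacknowledged one; closing it would require either rerunning Sections \ref{section gluing technique} and \ref{section computation partial rigidity} with per-alphabet lengths $\ell_i$ (so that $\Gamma(\tau_0^{n+1},\ldots,\tau_{d-1}^{n+1})$ is no longer of constant length and \cref{lemma BKK}, \cref{theorem constant length delta mu} and the $\phi\circ\rho^n\circ\psi$ decomposition must be reexamined), or modifying the substitutions so that the $|\tau_i|$ genuinely coincide.
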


\begin{remark*}
    The directive sequence $\boldsymbol \sigma$ in the statement fullfils all the hypothesis of \cref{thrm gluing technique} with $\tau_i = \zeta_{L^{2^{i+1}}}^{2^{d-i}} $ for $i = 0, \ldots, d-1$. In particular, $|\zeta_{L^{2^{i+1}}}^{2^{d-i}}(a_i)|= (L^{2^{i+1}})^{2^{d-i}} = L^{2^{i+1} \cdot 2^{d-i}} = L^{2^{d+1}}$, therefore the morphism $\sigma_n$ is indeed of constant length, for all $n \in \N$. In this case $h^{(n)} = (L^{2^{d+1}})^{n+1}\cdot(L^{2^{d+1}})^{n} \cdots L^{2^{d+1}}$. 
\end{remark*}

\begin{proof}
    By \cref{prop very rigid family} 
    \begin{equation*}
        \max_{k=2, \ldots, L^{2^{d+1}}  }  \left\{  \nu (\cC \cA_i^k) ,  \nu ( \overline{\cC}\cA_i^k)  \right\}   = \nu_i (a_ia_i) + \nu_i (b_ib_i)= \frac{L^{2^{i+1}}-1}{L^{2^{i+1}}+1} = \delta_{\nu_i}.
    \end{equation*}
    Therefore, by \cref{cor delta smaler} and \cref{thrm combination bound max}, 
    $ \displaystyle \delta_{\mu_i} = \delta_{\nu_i}$, concluding \eqref{final eq}. Since 
 \begin{equation*}
     \lim_{n \to \infty} \sum_{j=0}^{d-1} \mu_i^{(n)}(a_ja_j) + \mu_i^{(n)}(b_jb_j) = \lim_{n \to \infty} \mu_i^{(n)}(a_ia_i) + \mu_i^{(n)}(b_ib_i) = \delta_{\mu_i},
 \end{equation*}
 by \cref{theorem constant length delta mu}, the partial rigidity sequence is given by $(h^{(n)})_{n \in \N}$.
\end{proof}

\begin{remark*}
    The construction in \cref{thrm final result} relies on the fact that the family of substitutions $\zeta_L$, for $L\geq 6$, had been previously studied.  A similar result  could be achieved including $\zeta_2$, which corresponds to the Thue-Morse substitution, also studied in \cite{donoso_maass_radic2023partial}, but in that case the partial rigidity sequence for its corresponding ergodic measure would have been $(3 \cdot h^{(n)})_{n \in \N}$ instead of $(h^{(n)})_{n \in \N}$. 
    In general, studying the partial rigidity rates of more substitution subshifts should allow us to construct more examples like the one in \cref{thrm final result}. In particular, it would be interesting to construct systems with algebraically independent partial rigidity rates, but even in the uniquely ergodic case, there is no example in the literature of a system with an irrational partial rigidity rate.

    We also notice that with the gluing technique introduced in \cref{thrm gluing technique} one can only build constant-length $\cS$-adic subshift, which have non-trivial rational eigenvalues, that is $m \geq 2$ and $1\leq k <m$ such that for some non-zero function $f \in L^2(X, \mu)$, $f \circ S = e^{2\pi i k /m} f$. Thus, a refinement of \cref{main thrm} would be to construct a minimal system with distinct weak-mixing measures and distinct partial rigidity rates. To construct an explicit example following a similar approach to the one outlined in this paper, it would be necessary to use a non-constant-length directive sequence and then being forced to use the general formula for the partial rigidity rate from \cite[Theorem B]{donoso_maass_radic2023partial}. Additionally, the equation \eqref{eq formula1} no longer holds in the non-constant-length case.

\end{remark*}

\bibliographystyle{abbrv} 


\bibliography{refs} 

\begin{thebibliography}{10}

\bibitem{bedaride_hilion_lusting_2022measureMonoid}
N.~B{\'e}daride, A.~Hilion, and M.~Lustig.
\newblock The measure transfer for subshifts induced by a morphism of free monoids.
\newblock {\em arXiv preprint arXiv:2211.11234}, 2022.

\bibitem{bedaride_hilion_lusting_2023measureSadic}
N.~B{\'e}daride, A.~Hilion, and M.~Lustig.
\newblock Measure transfer and s-adic developments for subshifts.
\newblock {\em Ergodic Theory and Dynamical Systems}, pages 1--35, 2023.

\bibitem{Bergelson_delJunco_Lemanczyk_Rosenblatt_rigidity_nonrecurrence:2014}
V.~Bergelson, A.~del Junco, M.~Lema\'{n}czyk, and J.~Rosenblatt.
\newblock Rigidity and non-recurrence along sequences.
\newblock {\em Ergodic Theory Dynam. Systems}, 34(5):1464--1502, 2014.

\bibitem{Berthe_Steiner_Thuswaldner_Recognizability_morphism:2019}
V.~Berth\'{e}, W.~Steiner, J.~M. Thuswaldner, and R.~Yassawi.
\newblock Recognizability for sequences of morphisms.
\newblock {\em Ergodic Theory Dynam. Systems}, 39(11):2896--2931, 2019.

\bibitem{bezuglyi_karpel_kwiatkowski2019exact}
S.~Bezuglyi, O.~Karpel, and J.~Kwiatkowski.
\newblock Exact number of ergodic invariant measures for bratteli diagrams.
\newblock {\em Journal of Mathematical Analysis and Applications}, 480(2):123431, 2019.

\bibitem{Bezuglyi_Kwiatkowski_Medynets_Solomyak_Finite_rank_Bratteli:2013}
S.~Bezuglyi, J.~Kwiatkowski, K.~Medynets, and B.~Solomyak.
\newblock Finite rank {B}ratteli diagrams: structure of invariant measures.
\newblock {\em Trans. Amer. Math. Soc.}, 365(5):2637--2679, 2013.

\bibitem{Coronel_Maass_Shao_seq_entropy_rigid:2009}
A.~Coronel, A.~Maass, and S.~Shao.
\newblock Sequence entropy and rigid {$\sigma$}-algebras.
\newblock {\em Studia Math.}, 194(3):207--230, 2009.

\bibitem{Cortez_Durand_Host_Maass_continuous_measurable_eigen_LR:2003}
M.~I. Cortez, F.~Durand, B.~Host, and A.~Maass.
\newblock Continuous and measurable eigenfunctions of linearly recurrent dynamical {C}antor systems.
\newblock {\em J. London Math. Soc. (2)}, 67(3):790--804, 2003.

\bibitem{Creutz_mixing_minimal_comp:2023}
D.~Creutz.
\newblock Measure-theoretically mixing subshifts of minimal word complexity.
\newblock 2023.

\bibitem{Danilenko_finite_rank_rationalerg_partial_rigidity:2016}
A.~I. Danilenko.
\newblock Actions of finite rank: weak rational ergodicity and partial rigidity.
\newblock {\em Ergodic Theory Dynam. Systems}, 36(7):2138--2171, 2016.

\bibitem{Dekking_Keane_mixing_substitutions:1978}
F.~M. Dekking and M.~Keane.
\newblock Mixing properties of substitutions.
\newblock {\em Z. Wahrscheinlichkeitstheorie und Verw. Gebiete}, 42(1):23--33, 1978.

\bibitem{Donoso_Durand_Maass_Petite_interplay_finite_rank_Sadic:2021}
S.~Donoso, F.~Durand, A.~Maass, and S.~Petite.
\newblock Interplay between finite topological rank minimal {C}antor systems, {$\mathcal{S}$}-adic subshifts and their complexity.
\newblock {\em Trans. Amer. Math. Soc.}, 374(5):3453--3489, 2021.

\bibitem{donoso_maass_radic2023partial}
S.~Donoso, A.~Maass, and T.~Radi{\'c}.
\newblock On partial rigidity of $\mathcal{S}$-adic subshifts.
\newblock {\em arXiv preprint arXiv:2312.12406}, 2023.

\bibitem{Donoso_Shao_uniform_rigid_models:2017}
S.~Donoso and S.~Shao.
\newblock Uniformly rigid models for rigid actions.
\newblock {\em Studia Math.}, 236(1):13--31, 2017.

\bibitem{Durand2000}
F.~Durand.
\newblock {L}inearly recurrent subshifts have a finite number of non-periodic subshift factors.
\newblock {\em Ergodic Theory Dynam. Systems}, 20:1061--1078, 2000.

\bibitem{Durand_Perrin_Dimension_groups_dynamical_systems:2022}
F.~Durand and D.~Perrin.
\newblock {\em Dimension groups and dynamical systems---substitutions, {B}ratteli diagrams and {C}antor systems}, volume 196 of {\em Cambridge Studies in Advanced Mathematics}.
\newblock Cambridge University Press, Cambridge, 2022.

\bibitem{Fayad_Kanigowski_rigidity_wm_rotation:2015}
B.~Fayad and A.~Kanigowski.
\newblock Rigidity times for a weakly mixing dynamical system which are not rigidity times for any irrational rotation.
\newblock {\em Ergodic Theory Dynam. Systems}, 35(8):2529--2534, 2015.

\bibitem{Friedman_partial_mixing_rigidity_factors:1989}
N.~A. Friedman.
\newblock Partial mixing, partial rigidity, and factors.
\newblock In {\em Measure and measurable dynamics ({R}ochester, {NY}, 1987)}, volume~94 of {\em Contemp. Math.}, pages 141--145. Amer. Math. Soc., Providence, RI, 1989.

\bibitem{Furstenbergbook:1981}
H.~Furstenberg.
\newblock {\em Recurrence in ergodic theory and combinatorial number theory}.
\newblock Princeton University Press, Princeton, N.J., 1981.
\newblock M. B. Porter Lectures.

\bibitem{Furstenberg_Weiss77}
H.~Furstenberg and B.~Weiss.
\newblock The finite multipliers of infinite ergodic transformations.
\newblock In {\em The structure of attractors in dynamical systems ({P}roc. {C}onf., {N}orth {D}akota {S}tate {U}niv., {F}argo, {N}.{D}., 1977)}, volume 668 of {\em Lecture Notes in Math.}, pages 127--132. Springer, Berlin, 1978.

\bibitem{Glasner_Maon_rigidity_topological:1989}
S.~Glasner and D.~Maon.
\newblock Rigidity in topological dynamics.
\newblock {\em Ergodic Theory Dynam. Systems}, 9(2):309--320, 1989.

\bibitem{Goodson_Ryzhikov_conj_joinings_producs_rank1:1997}
G.~R. Goodson and V.~V. Ryzhikov.
\newblock Conjugations, joinings, and direct products of locally rank one dynamical systems.
\newblock {\em J. Dynam. Control Systems}, 3(3):321--341, 1997.

\bibitem{Katok_interval_exchange_not_mixing:1980}
A.~Katok.
\newblock Interval exchange transformations and some special flows are not mixing.
\newblock {\em Israel J. Math.}, 35(4):301--310, 1980.

\bibitem{King_joining-rank_finite_mixing:1988}
J.~L. King.
\newblock Joining-rank and the structure of finite rank mixing transformations.
\newblock {\em J. Analyse Math.}, 51:182--227, 1988.

\bibitem{Queffelec1987}
M.~Queff{\'e}lec.
\newblock {\em Substitution dynamical systems---spectral analysis}, volume 1294 of {\em Lecture Notes in Mathematics}.
\newblock Springer-Verlag, Berlin, 1987.

\end{thebibliography}

\end{document}